\numberwithin{equation}{section}
\newtheoremstyle{thmlemcorr}{10pt}{10pt}{\itshape}{}{\bfseries}{.}{10pt}{{\thmname{#1}\thmnumber{ #2}\thmnote{ (#3)}}}
\newtheoremstyle{thmlemcorr*}{10pt}{10pt}{\itshape}{}{\bfseries}{.}\newline{{\thmname{#1}\thmnumber{ #2}\thmnote{ (#3)}}}
\newtheoremstyle{remexample}{10pt}{10pt}{}{}{\bfseries}{.}{10pt}{{\thmname{#1}\thmnumber{ #2}\thmnote{ (#3)}}}
\newtheoremstyle{ass}{10pt}{10pt}{}{}{\bfseries}{.}{10pt}{{\thmname{#1}\thmnumber{ A#2}\thmnote{ (#3)}}}
\theoremstyle{thmlemcorr}
\newtheorem{theorem}{Theorem}
\numberwithin{theorem}{section}
\newtheorem{lemma}[theorem]{Lemma}
\newtheorem{corollary}[theorem]{Corollary}
\theoremstyle{thmlemcorr*}
\newtheorem{theorem*}{Theorem}
\newtheorem{lemma*}[theorem]{Lemma}
\newtheorem{corollary*}[theorem]{Corollary}
\newtheorem{proposition*}[theorem]{Proposition}
\newtheorem{problem*}[theorem]{Problem}
\newtheorem{conjecture*}[theorem]{Conjecture}
\newtheorem{definition*}[theorem]{Definition}
\theoremstyle{remexample}
\newtheorem{remark}[theorem]{Remark}
\theoremstyle{ass}
\newcommand{\Ocal}{\mathcal{O}}
\newcommand{\T}{\mathbb{T}}
\DeclareMathOperator{\diverg}{div}
\DeclareMathOperator{\supp}{supp}
\newcommand{\norm}[1]{\|#1\|}
\newcommand{\R}{\mathbb{R}}
\newcommand{\term}[1]{\textbf{#1}}
\def\XXint#1#2#3{{\setbox0=\hbox{$#1{#2#3}{\int}$}
\vcenter{\hbox{$#2#3$}}\kern-.5\wd0}}
\renewcommand{\epsilon}{\varepsilon}
\renewcommand{\phi}{\varphi}
\begin{document}


\title[Localised Relative Energy]{Localised Relative Energy and Finite Speed of Propagation for Compressible Flows}

\author{Emil Wiedemann}
\address{\textit{Emil Wiedemann:} Institute of Applied Mathematics, Leibniz University Hannover, Welfengarten~1, 30167 Hannover, Germany}
\email{wiedemann@ifam.uni-hannover.de}

\begin{abstract}
 For the incompressible and the isentropic compressible Euler equations in arbitrary space dimension, we establish the principle of localised relative energy, thus generalising the well-known relative energy method. To this end, we adapt classical arguments of C.~Dafermos to the Euler equations. We give several applications to the behaviour of weak solutions, like local weak-strong uniqueness, local preservation of smoothness, and finite speed of propagation for the isentropic system.
\end{abstract}







\maketitle




\section{Introduction}
This paper is mainly about the \term{isentropic compressible Euler equations} in arbitrary space dimension $d\geq1$, and about their weak solutions that satisfy the local energy inequality (or, in hyperbolic terminology, the entropy inequality). The system is given as
\begin{equation*}
\begin{aligned}
\partial_t (\rho u) + \diverg(\rho u\otimes u)+\nabla \rho^\gamma&=0,\\
\partial_t\rho+\diverg (\rho u)&=0,
\end{aligned}
\end{equation*} 
and is often considered a paradigmatic hyperbolic system of conservation laws in multiple space dimensions. It arises from the full Euler system, which consists of the conservation of mass, momentum, and total energy of an inviscid compressible fluid, by assuming the entropy to be constant. Here, for simplicity, we consider only polytropic gases, for which the pressure law reads $p(\rho)=\rho^\gamma$ with $\gamma>1$. The variable $\rho$ signifies the density of the gas and $u$ its velocity. As long as $\rho>0$ remains bounded away from zero, the system is strictly hyperbolic, whereas the presence of vacuum sets the Euler equations outside of the standard theory of hyperbolic conservation laws. 

The study of the Euler equations in more than one space dimension is notoriously difficult, particularly when it comes to rigorous mathematical results. To date there is no known class of solutions that exist globally in time for any initial data, except for {measure-valued solutions} and related ``very weak" solutions. On the other hand, the question of uniqueness of solutions has been challenging researchers for decades. It is well-known that even for the simplest examples of scalar nonlinear conservation laws, several weak solutions may arise from the same initial data, and that the unique ``correct" solution has to be singled out invoking the Second Law of Thermodynamics. At least for scalar conservation laws, Kruzhkov made this rigorous in his seminal work~\cite{kruzhkov}, proving that weak solutions satisfying an \emph{entropy inequality} exist and are unique. 

For hyperbolic systems, however, such a result can not be true, as shown by recent counterexamples to uniqueness for the isentropic Euler system~\cite{euler2, chiodaroli, chiodarolidelelliskreml, klingenberg} and related models~\cite{chiodarolifeireislkreml, donatelli, feireislgwiazdasavage, luoxiexin, chiodarolimichalek}. Despite the lack of a proper uniqueness criterion, a weaker form of uniqueness does hold true: So-called \term{weak-strong uniqueness} ensures that an entropy solution remains unique \emph{as long as a strong solution with the same initial data exists}. This principle is valid for surprisingly large classes of solutions, including measure-valued ones; see for example \cite{Daf, brenieretal, FeJiNo, DST, GwSwWi, brezinafeireisl, WiSurvey, DeGwLy}, just to name a few.   

The principle of weak-strong uniqueness is usually established by the \term{relative energy/entropy method}, introduced by C.~Dafermos~\cite{Daf}. The idea is to compare a strong and a weak solution in terms of a relative version of the formally conserved energy or entropy of the system in question, and to derive an estimate for this quantity which forces it to vanish identically if it was zero initially. In the course of the estimate, it is typically necessary to shift all appearing derivatives to the strong solution. Relative energy or entropy approaches do not only help to establish weak-strong uniqueness, but can also be applied to study unconditional uniqueness~\cite{FeKrVa}, stability~\cite{chen1, chen2, serre, serrevasseur}, singular limits~\cite{saintraymond,feireisllukacova, brezinamacha}, or asymptotic behaviour at large times~\cite{perthame}.

In this article we introduce a space-localised form of relative energy for the incompressible and isentropic Euler systems, based on the relative entropy method in the version established by Dafermos in Theorem 5.3.1 of~\cite{Daf}. As an application, we prove \term{finite speed of propagation} for energy-admissible solutions of the isentropic Euler equations. The principle of finite speed of propagation says, in its basic version, that a solution whose support\footnote{In the context of compressible flows given by a pair $(\rho,u)$, by ``support" we really mean the support of $(\rho-\bar{\rho},u)$, where $\bar{\rho}>0$ is a constant ``background density".} is initially contained in a ball with radius $\eta$ will stay in a ball of radius at most $\eta+Ct$. For smooth solutions this can be proved without much difficulty, see e.g.\ Theorem 2.2 in~\cite{wang}. For weak solutions with density bounded below by a positive constant, the result follows from the above-mentioned Theorem 5.3.1 in~\cite{Daf}. In this paper we are also able to deal with solutions that possibly contain vacuum regions, at least for adiabatic exponents $\gamma\geq2$.  

While it may seem perfectly plausible that weak solutions will obey the principle of finite-speed propagation, which represents a fundamental characteristic of hyperbolic equations, it is not at all obvious that this can be rigorously proved. After all, the above-mentioned counterexamples for the isentropic system show that weak solutions may behave in unexpected ways. In fact, finite speed of propagation is \emph{not} valid for weak solutions in the absence of an energy inequality, as demonstrated in Chapter 5 of \cite{chiodarolithesis}: An initially compactly supported solution (i.e.\ a solution whose velocity is zero and whose density is constant outside a compact set) may instantaneously occupy the whole space. Indeed, a particular instance of Theorem 5.1.1 in~\cite{chiodarolithesis} is given by zero initial velocity and constant initial density, from which non-trivial solutions emerge. 

If the local energy inequality is satisfied, however, we can show a stronger version of the principle of finite-speed propagation (Corollary \ref{finite} below): \emph{Given a smooth solution and a bounded weak solution that initially agrees with the strong one outside a ball of radius $R$, then the two solutions remain identical outside a ball of radius $R+Ct$}. In other words, possibly non-smooth perturbations to a solution can spread only at finite speed.

This statement is an easy consequence of \term{local weak-strong uniqueness}, which may be interesting in its own right: \emph{If a strong and a weak solution agree initially in a neighbourhood of a point, then they agree on a (possibly smaller) neighbourhood of that point up to a positive time} (Theorem \ref{locweakstrong}). Again, in the absence of vacuum, this follows from the result in~\cite{Daf}, whereas here we can deal with vacuum if $\gamma\geq2$.

We also consider the \emph{incompressible} Euler equations
\begin{equation*}
\begin{aligned}
\partial_t u + \diverg(u\otimes u)+\nabla p&=0,\\
\diverg u&=0.
\end{aligned}
\end{equation*}
These equations are nonlocal, because the pressure depends on the velocity by means of a singular integral operator.  Consequently, perturbations do not propagate at finite speed -- not even for perfectly smooth solutions. Correspondingly, our localised relative energy approach gives only a weaker result than in the compressible case: A strong and a weak solution which share their initial data in some open set cannot instantaneously ``jump away" from each other on that set (Corollary~\ref{incompcor}) in terms of the $L^2$ norm. We remark that Corollary~\ref{incompcor} is not trivial and does not hold true in absence of the energy inequality; indeed, the conclusion of the corollary is violated in case of the Scheffer-Shnirelman paradox~\cite{scheffer, shnirel1} and related examples~\cite{euler1, eulerexistence}. In this context, it is an interesting open question whether ``turbulent zones" in the incompressible Euler equations with vortex sheet initial data can spread only at finite speed, as suggested by the results in~\cite{vortexsheet, eulerboundary}. 

The paper proceeds as follows: In Section~\ref{prelim} we recall the definition of weak solutions for the incompressible and compressible Euler equations and the respective local energy inequality. Section \ref{locrelenergy} presents the localised relative energy method for both Euler systems: Theorems~\ref{basicthm} and~\ref{Eweak-strong} give estimates for the relative energy restricted to a subdomain (namely, the support of a smooth cutoff function $\phi$). The flux of relative energy through this (possibly time-dependent) subdomain results in two additional integrals compared to the conventional (global) relative energy. Section~\ref{appl} then features the mentioned applications to local weak-strong-uniqueness, finite speed of propagation, and local preservation of smoothness, obtained from Theorems~\ref{basicthm} and~\ref{Eweak-strong} by choosing $\phi$ appropriately. This requires boundedness of solutions and either absence of vacuum or $\gamma\geq2$. In view of conceivable applications to singular limits, where boundedness of approximate solutions is not guaranteed by the available a priori estimates, it would be interesting to extend our results to weak solutions that are not necessarily in $L^\infty$.

\textbf{Acknowledgement.} I would like to thank Jan Giesselmann for valuable comments on a previous version of this manuscript.

\section{The Euler Equations}\label{prelim}
Throughout this paper, $\Omega\subset\R^d$ denotes a (not necessarily bounded) domain and $T>0$ any (not necessarily ``small") time. 

\subsection{The incompressible system}
Recall that the incompressible Euler equations are given as
\begin{equation}\label{euler}
\begin{aligned}
\partial_t u + \diverg(u\otimes u)+\nabla p&=0\\
\diverg u&=0.
\end{aligned}
\end{equation}
Often, weak solutions are defined by testing against divergence-free test fields only, to the effect that the pressure can be ignored and one only needs to assume $u\in L_{loc}^2(\Omega\times (0,T))$ in order to make sense of the resulting integral. However, as observed by Duchon-Robert \cite{duchonrobert}, if the local energy flux is to be well-defined, one should rather consider pairs 
\begin{equation*}
(u,p)\in L^3_{loc}(\Omega\times[0,T];\R^d)\times L_{loc}^{3/2}(\Omega\times[0,T];\R),
\end{equation*}
and such a pair is called a \term{weak solution} of this system with initial datum $u^0\in L^2_{loc}(\Omega)$ if, for almost every $\tau\in (0,T)$ and every vector field $\phi\in C_c^1(\Omega\times [0,T];\R^d)$, we have  
\begin{equation*}
\begin{aligned}
&\int_0^\tau\int_{\Omega}\partial_t\phi(x,t)\cdot u(x,t)+\nabla\phi(x,t):(u\otimes u)(x,t)+\diverg\phi(x,t)p(x,t)dxdt\\
&\hspace{3cm}=\int_{\Omega}u(x,\tau)\cdot\phi(x,\tau)-u^0(x)\cdot\phi(x,0)dx,
\end{aligned}
\end{equation*}
and if $u$ is weakly divergence-free in the sense that
\begin{equation*}
\int_{\Omega}u(x,\tau)\cdot\nabla \psi(x)dx=0
\end{equation*}
for almost every $\tau\in(0,T)$ and every $\psi\in C_c^1(\Omega)$. Note we have not imposed any boundary condition, as we are interested only in the \emph{local} properties of a solution within $\Omega$.

A weak solution $u$ satisfies the \term{local energy inequality} if 
\begin{equation*}
\frac12\partial_t|u|^2+\diverg\left[\left(\frac{|u|^2}{2}+p\right)u\right]\leq0
\end{equation*} 
in the sense of distributions, i.e.\ if
\begin{equation*}
\int_0^\tau\int_\Omega \frac12\partial_t\phi|u|^2+\nabla\phi\cdot\left[\left(\frac{|u|^2}{2}+p\right)u\right]dxdt\geq \frac12\int_\Omega \phi(\tau)|u(\tau)|^2dx-\frac12\int_\Omega \phi(0)|u^0|^2dx
\end{equation*}
for every nonnegative $\phi\in C_c^1(\Omega\times[0,T])$, for almost every $\tau\in(0,T)$. This reflects the absence of local energy production and requires $u\in L^3_{loc}$, $p\in L_{loc}^{3/2}$, hence the corresponding assumption in the definition of weak solution. Local admissibility is the natural analogue for the incompressible Euler equations of the so-called \term{entropy condition} in the context of (hyperbolic) conservation laws. 

\subsection{The isentropic compressible system}
The isentropic compressible Euler equations with adiabatic exponent $\gamma>1$ are given as
\begin{equation}\label{isentropic}
\begin{aligned}
\partial_t (\rho u) + \diverg(\rho u\otimes u)+\nabla \rho^\gamma&=0,\\
\partial_t\rho+\diverg (\rho u)&=0.
\end{aligned}
\end{equation}

Again we can formulate these equations in the sense of distributions: If $\rho^0\in L^\gamma_{loc}(\Omega)$ is nonnegative and $\rho^0|u^0|^2\in L^1_{loc}(\Omega)$, then a pair $(\rho,u)$ is a \term{weak solution} of the isentropic system with initial datum $(\rho^0,u^0)$ if $\rho\in L_{loc}^\gamma(\Omega\times[0,T])$, $\rho|u|^2\in L^1_{loc}(\Omega\times [0,T])$, and

\begin{equation}\label{Emass_momentum}
\begin{aligned}
\int_0^\tau\int_{\Omega}\partial_t\psi \rho+\nabla\psi\cdot{\rho u}dxdt+\int_{\Omega}\psi(x,0)\rho^0-\psi(x,\tau)\rho(x,\tau)dx&=0,\\
\int_0^\tau\int_{\Omega}\partial_t\eta\cdot{\rho u}+\nabla\eta : {(\rho u\otimes u)}+\diverg\eta{\rho^\gamma}dxdt\\
+\int_{\Omega}\eta(x,0)\cdot \rho^0u^0-\eta(x,\tau)\cdot{\rho u}(x,\tau)dx&=0
\end{aligned}
\end{equation}
for almost every $\tau\in(0,T)$, every $\psi\in C_c^1(\Omega\times[0,T])$, and every $\eta\in C_c^1(\Omega\times[0,T];\R^d)$. In analogy with the incompressible situation, we say the solution fulfills the \term{local energy inequality} if for almost every $\tau\in(0,T)$

\begin{equation*}
\partial_t\left(\frac{\rho|u|^2}{2}+\frac{1}{\gamma-1}\rho^\gamma\right)+\diverg\left[\left(\frac{\rho|u|^2}{2}+\frac{\gamma}{\gamma-1}\rho^\gamma\right)u\right]\leq0
\end{equation*}
in the sense of distributions, i.e.
\begin{equation}\label{Emvsenergy}
\begin{aligned}
\int_\Omega&\frac12\phi(\tau)\rho(\tau)|u|^2(\tau)+\frac{1}{\gamma-1}\phi(\tau)\rho(\tau)^\gamma dx\\
\leq&\int_\Omega\frac{1}{2}\phi(0)\rho^0|u^0|^2+\frac{1}{\gamma-1}\phi(0)(\rho^0)^\gamma dx\\
&+\int_0^\tau\int_\Omega\partial_t\phi\left(\frac{\rho|u|^2}{2}+\frac{1}{\gamma-1}\rho^\gamma\right)+\left(\frac{\rho|u|^2}{2}+\frac{\gamma}{\gamma-1}\rho^\gamma\right)\nabla\phi\cdot u dxdt
\end{aligned}
\end{equation}
for every nonnegative $\phi\in C_c^1(\Omega\times[0,T])$. In particular, by stating that a solution satisfies the local energy inequality, we imply that all integrals appearing in~\eqref{Emvsenergy} are well-defined. When $\rho$ is bounded away from zero, then \eqref{Emass_momentum} becomes a hyperbolic system of conservation laws, and \eqref{Emvsenergy} constitutes the entropy inequality. One should not be confused by the fact that, in this case, the (physical) energy coincides with the (mathematical) entropy.

\section{Localised Relative Energy}\label{locrelenergy}
\subsection{The incompressible case}
In this section we demonstrate the localised relative energy method in the simpler case of the incompressible Euler equations \eqref{euler}.

We will make use of the following terminology: Given a relatively open subset $\mathcal{O}\subset\Omega\times[0,T]$, we say that a pair $(U,P)\in C^1(\overline{\mathcal{O}})$ is a \term{local strong solution} in $\mathcal{O}$ if it satisfies the Euler equations pointwise at every $(x,t)\in \mathcal{O}$, and a pair $(u,p)\in L^3_{loc}(\mathcal{O})\times L^{3/2}_{loc}(\mathcal{O})$ is a \term{local weak solution} in $\mathcal{O}$ if it satisfies the definition of weak solution for every test function with compact support in~$\mathcal{O}$. Note these functions need not even be defined outside $\Ocal$ to qualify as local (strong or weak) solutions in $\Ocal$. 

The local energy inequality can be interpreted analogously on $\mathcal{O}$, so that the inequality only needs to hold when tested against functions of compact support in $\Ocal$. A local weak solution in $\Ocal$ that satisfies the local energy inequality in $\Ocal$ is called an \term{admissible weak solution in $\Ocal$}. Note once more that no boundary conditions are prescribed. 

Let $\phi\in C_c^1(\Omega\times[0,T])$ be nonnegative and $u, U$ be the velocities of a local weak and strong solution in a relative neighbourhood of $\supp\phi$, respectively. Then we define for almost every $\tau\in(0,T)$ the \term{localised relative energy} 
\begin{equation*}
E_{rel}(\tau):=\frac{1}{2}\int_\Omega\phi(\tau)|u(x,\tau)-U(x,\tau)|^2dx.
\end{equation*}

We can now state and prove our first result:
\begin{theorem}\label{basicthm}
Let $\phi\in C_c^1(\Omega\times [0,T])$ be a nonnegative test function whose support is contained in a relatively open subset $\mathcal{O}\subset\Omega\times[0,T]$. Let $(u,p)\in L^3_{loc}(\mathcal{O})\times L^{3/2}_{loc}(\mathcal{O})$ be an admissible weak solution in $\Ocal$, and $(U,P)\in C^1(\overline{\mathcal{O}})$ a local strong solution of \eqref{euler}, and assume that $u$ and $U$ coincide on $\Ocal\cap\{t=0\}$, where they take the value $u^0\in C^1(\overline{\Ocal}\cap\{t=0\})$. 
Then, 
\begin{equation*}
\begin{aligned}
E_{rel}^\phi(\tau)&\leq\int_0^\tau\int_\Omega\frac12\partial_t\phi |U-u|^2dxdt+\nabla\phi\cdot\left[\frac{|U-u|^2}{2}u+(P-p)(U-u)\right] dxdt\\
&\hspace{1cm}+2\int_0^\tau\norm{\nabla_{sym}U(t)}_{L^\infty({\Ocal})}E^\phi_{rel}(t)dt.\\
\end{aligned}
\end{equation*}
\end{theorem}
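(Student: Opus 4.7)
The plan is to bound $E_{rel}^\phi(\tau)$ by combining three pieces of information: (i) the local energy inequality for $u$ applied with the nonnegative test function $\phi$; (ii) the pointwise energy identity $\partial_t(|U|^2/2)+\diverg((|U|^2/2+P)U)=0$ satisfied by the smooth solution; and (iii) the weak momentum equation for $u$ tested against $\eta=\phi U$, which is an admissible $C_c^1$ test field since $\phi$ has compact support in $\Ocal$ and $U\in C^1(\overline{\Ocal})$. The starting algebraic identity is
\begin{equation*}
E_{rel}^\phi(\tau)=\tfrac12\int_\Omega\phi(\tau)|u(\tau)|^2dx+\tfrac12\int_\Omega\phi(\tau)|U(\tau)|^2dx-\int_\Omega\phi(\tau)u(\tau)\cdot U(\tau)dx,
\end{equation*}
and the initial-data hypothesis $u^0=U(0)$ on $\supp\phi(0)$ is what makes the boundary terms at $t=0$ cancel.

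First I would expand (ii) and (iii) in detail. Multiplying the smooth energy identity by $\phi$ and integrating, after one integration by parts in space (justified by compact support of $\phi$), yields an exact equality for $\frac12\int\phi(\tau)|U|^2$ in terms of initial data plus integrals of $\partial_t\phi|U|^2/2+\nabla\phi\cdot(|U|^2/2+P)U$. For (iii), I would substitute the strong momentum equation $\partial_tU=-(U\cdot\nabla)U-\nabla P$ into $\phi\partial_tU\cdot u$ inside the expanded weak formulation. The resulting term $\phi u\cdot\nabla P$ has no a priori distributional meaning via $\nabla P$ being placed on $u$, but rewriting $\phi(u\cdot\nabla P)=u\cdot\nabla(\phi P)-P(\nabla\phi\cdot u)$ and invoking the weak divergence-free property against the test function $\phi P\in C_c^1$ eliminates the first summand.

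Summing (i) with $+$, (ii) with $+$, and (iii) with $-$, the initial contributions cancel, and the $\partial_t\phi$ terms combine into $\frac12\partial_t\phi|u-U|^2$. The remaining interior-$\phi$ term reduces to $-\phi u\cdot[(u-U)\cdot\nabla]U$. Splitting $u=(u-U)+U$, the first piece is $-\phi(u-U)\otimes(u-U):\nabla U$, which by symmetry of the tensor $(u-U)\otimes(u-U)$ equals $-\phi(u-U)\otimes(u-U):\nabla_{sym}U$ and is bounded pointwise in absolute value by $2\|\nabla_{sym}U\|_{L^\infty(\Ocal)}\cdot\tfrac\phi2|u-U|^2$; this produces the Gronwall-type term. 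The second piece $-\phi U\cdot[(u-U)\cdot\nabla]U=-\phi(u-U)\cdot\nabla(|U|^2/2)$ is integrated by parts (using again the weak divergence-free condition for $u$ and $\diverg U=0$ pointwise), producing a single $\nabla\phi$ correction of the form $\frac{|U|^2}{2}\nabla\phi\cdot(u-U)$.

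The main obstacle is then the bookkeeping of the $\nabla\phi$ fluxes, which arise from four distinct sources: the energy (in)equalities for $u$ and for $U$, the $\diverg\eta\cdot p$ contribution in the weak formulation, and the two integrations by parts (on the pressure $\phi u\cdot\nabla P$ and on the interior term $-\phi U\cdot[(u-U)\cdot\nabla]U$). A direct algebraic check shows that, precisely thanks to the last-mentioned correction $\frac{|U|^2}{2}\nabla\phi\cdot(u-U)$, all of these fluxes collapse to the single symmetric expression $\nabla\phi\cdot[\frac{|u-U|^2}{2}u+(P-p)(U-u)]$ claimed in the theorem. Recognising that this auxiliary integration by parts is both necessary and well-defined (again via the weak divergence-free property) is the key algebraic observation; without it one is left with spurious terms like $\frac{|U|^2}{2}\nabla\phi\cdot(U-u)$ that would destroy the ``relative'' structure of the bound.
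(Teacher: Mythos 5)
Your proposal is correct and follows essentially the same route as the paper: expanding $E^\phi_{rel}$, combining the local energy inequality for $u$, the energy balance for $U$, and the weak momentum equation tested with $\phi U$, then using the pointwise equation for $U$, the weak divergence-free condition (with test functions $\phi P$ and $\phi|U|^2/2$) to convert the pressure and cubic terms into $\nabla\phi$ fluxes, and absorbing the quadratic term via $\nabla_{\mathrm{sym}}U$ into the Gronwall integral. Your flux bookkeeping, including the cancellation to $\nabla\phi\cdot\bigl[\tfrac{|u-U|^2}{2}u+(P-p)(U-u)\bigr]$, matches the paper's computation.
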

\begin{proof}
We compute 
\begin{align*}
E_{rel}^\phi(\tau)&= \frac{1}{2}\int_{\Omega}\phi(x,\tau)|u(x,\tau)-U(x,\tau)|^2dx\\
&=\frac{1}{2}\int_{\Omega}\phi(x,\tau)|U(x,\tau)|^2dx+\frac{1}{2}\int_{\Omega}\phi(x,\tau)|u(x,\tau)|^2dx-\int_{\Omega}u(x,\tau)\cdot \phi(x,\tau)U(x,\tau)dx\\
&\leq \frac{1}{2}\int_{\Omega}\phi(x,0)|u^0|^2dx+\int_0^\tau\int_\Omega\partial_t\phi\frac12 |U|^2+\left(\frac{|U|^2}{2}+P\right)U\cdot\nabla\phi dxdt\\
&\hspace{1cm} + \frac{1}{2}\int_{\Omega}\phi(x,0)|u^0|^2dx+\int_0^\tau\int_\Omega\partial_t\phi\frac12 |u|^2+\left(\frac{|u|^2}{2}+p\right)u\cdot\nabla\phi dxdt\\
&\hspace{1cm}-\int_{\Omega}u(x,\tau)\cdot \phi(x,\tau)U(x,\tau)dx,
\end{align*}
where we used the local energy inequality. Next, let us employ the weak formulation of the Euler equations for $u$ with test function $\phi U$ to deal with the last integral:

\begin{align*}
E_{rel}^\phi(\tau)&\leq\int_{\Omega}\phi(x,0)|u^0(x)|^2dx-\int_{\Omega}\phi(x,0)u^0(x)\cdot u^0(x)dx\\
&\hspace{1cm}\int_0^\tau\int_\Omega\partial_t\phi\frac12 |U|^2+\left(\frac{|U|^2}{2}+P\right)U\cdot\nabla\phi dxdt\\
&\hspace{1cm} +\int_0^\tau\int_\Omega\partial_t\phi\frac12 |u|^2+\left(\frac{|u|^2}{2}+p\right)u\cdot\nabla\phi dxdt\\
&\hspace{1cm}-\int_0^\tau\int_{\Omega}[\partial_t(\phi U)\cdot u+\nabla (\phi U):(u\otimes u)+\diverg(\phi U)p]dxdt\\
&=\int_0^\tau\int_\Omega\partial_t\phi\left(\frac12 |U|^2+\frac12 |u|^2\right)+\left(\frac{|U|^2}{2}+P\right)U\cdot\nabla\phi +\left(\frac{|u|^2}{2}+p\right)u\cdot\nabla\phi dxdt\\
&\hspace{1cm}-\int_0^\tau\int_{\Omega}\phi[\partial_tU\cdot u+\nabla U:(u\otimes u)]dxdt\\
&\hspace{1cm}-\int_0^\tau\int_{\Omega}\partial_t\phi U\cdot u+\nabla\phi \cdot(u\otimes u)U+\nabla\phi\cdot pUdxdt.
\end{align*}
Now we collect some terms and use the pointwise form of the Euler equations for $U$:
\begin{align*}
E_{rel}^\phi(\tau)&\leq \int_0^\tau\int_\Omega\frac12\partial_t\phi |U-u|^2+\nabla\phi\cdot\left[\left(\frac{|U|^2}{2}+P-p\right)U - (u\otimes u)U+\left(\frac{|u|^2}{2}+p\right)u\right] dxdt\\
&\hspace{1cm}+\int_0^\tau\int_{\Omega}\phi[\diverg(U\otimes U)\cdot u+\nabla P\cdot u-\nabla U:(u\otimes u)]dxdt.
\end{align*}
Observe the identities
\begin{equation*}
\diverg(U\otimes U)=\nabla UU,
\end{equation*} 
\begin{equation*}
\nabla U:(u\otimes u)=u\cdot\nabla U u,
\end{equation*}
and
\begin{equation*}
\int_\Omega\phi\nabla P\cdot u dx=-\int_\Omega\nabla\phi\cdot Pu dx,
\end{equation*}
which follow from the divergence-free property of $u$ and $U$, to obtain
\begin{align*}
E_{rel}^\phi(\tau)&\leq \int_0^\tau\int_\Omega\frac12\partial_t\phi |U-u|^2+\nabla\phi\cdot\left[\frac{|U|^2}{2}U+(P-p)(U-u) - (u\otimes u)U+\frac{|u|^2}{2}u\right] dxdt\\
&\hspace{1cm}+\int_0^\tau\int_{\Omega}\phi[(u-U)\cdot\nabla U(U-u)]dxdt\\
&\hspace{1cm}+\int_0^\tau\int_{\Omega}\phi[U\cdot\nabla U(U-u)]dxdt.
\end{align*}
Using again the divergence-free condition, the last integral is easily seen to equal
\begin{equation*}
-\frac12 \int_0^\tau\int_\Omega\nabla\phi\cdot (U-u)|U|^2dxdt,
\end{equation*}
so that we finally obtain
\begin{align*}
E_{rel}^\phi(\tau)&\leq \int_0^\tau\int_\Omega\frac12\partial_t\phi |U-u|^2dxdt+\nabla\phi\cdot\left[\frac{|U|^2}{2}u+(P-p)(U-u) - (u\otimes u)U+\frac{|u|^2}{2}u\right] dxdt\\
&\hspace{1cm}+2\int_0^\tau\norm{\nabla_{sym}U(t)}_{L^\infty({\Ocal})}E^\phi_{rel}(t)dt\\
&=\int_0^\tau\int_\Omega\frac12\partial_t\phi |U-u|^2dxdt+\nabla\phi\cdot\left[\frac{|U-u|^2}{2}u+(P-p)(U-u)\right] dxdt\\
&\hspace{1cm}+2\int_0^\tau\norm{\nabla_{sym}U(t)}_{L^\infty({\Ocal})}E^\phi_{rel}(t)dt.\\
\end{align*}

\end{proof}

\subsection{The compressible case}

We now consider the isentropic system~\eqref{isentropic}. Again, a pointwise solution $(R,U)\in C^1(\overline{\Ocal})$ is called a \term{local strong solution} in $\Ocal$, and a pair $(\rho,u)$ satisfying the definition of a weak solution and the local energy inequality tested against functions with support in $\Ocal$ is called an \term{admissible weak solution in $\Ocal$}. 

We define for almost every\ $\tau\in[0,T]$ and any $\phi\in C_c^1(\Omega\times[0,T])$ with $\phi\geq0$ the \term{localised relative energy} between $(R,U)$ and $(\rho,u)$ as
\begin{equation*}
E^\phi_{rel}(\tau)=\int_{\Omega}\frac{1}{2}\phi{\rho|u-U|^2}+{\frac{1}{\gamma-1}\phi\rho^\gamma-\frac{\gamma}{\gamma-1}\phi R^{\gamma-1}\rho+\phi R^\gamma}dx.
\end{equation*}

\begin{theorem}\label{Eweak-strong}
Let $\phi\in C_c^1(\Omega\times [0,T])$ be a nonnegative function whose support is contained in a relatively open set $\mathcal{O}\subset\Omega\times[0,T]$. Let $(\rho, u)$ be an admissible weak solution in $\Ocal$, and $(R,U)$ a local strong solution of \eqref{isentropic} in $\Ocal$, and assume that the initial data for $(\rho,u)$ and $(R,U)$ coincide on $\Ocal\cap\{t=0\}$, where they are denoted $(\rho^0,u^0)$. 
Then, for almost every $\tau\in(0,T)$,
\begin{equation}\label{comprelenergy}
\begin{aligned}
E_{rel}^\phi(\tau)&\leq\int_0^\tau\int_\Omega\partial_t\phi\left[\frac12\rho|u-U|^2+R^\gamma-\frac{\gamma}{\gamma-1}\rho R^{\gamma-1}+\frac{1}{\gamma-1}\rho^\gamma\right] dxdt\\
&+\int_0^\tau\int_\Omega\nabla\phi\cdot\left[\frac12\rho|u-U|^2u-\frac{\gamma}{\gamma-1}(R^{\gamma-1}-\rho^{\gamma-1})\rho u+(R^\gamma-\rho^\gamma)U\right] dxdt\\
&\hspace{1cm}+2\int_0^\tau\norm{U(t)}_{C^1(\overline{\Ocal})}E^\phi_{rel}(t)dt.\\
\end{aligned}
\end{equation}
\end{theorem}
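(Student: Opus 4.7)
The plan is to adapt the template of Theorem~\ref{basicthm} by tracking both momentum and density, i.e.\ to carry out the Dafermos relative-entropy recipe against the cut-off $\phi$. Using the expansion $\frac12\rho|u-U|^2=\frac12\rho|u|^2-\rho u\cdot U+\frac12\rho|U|^2$, I would first split
\begin{equation*}
E_{rel}^\phi(\tau)=\int_\Omega\phi\Big[\frac{\rho|u|^2}{2}+\frac{\rho^\gamma}{\gamma-1}\Big]dx+\int_\Omega\phi\Big[\frac{\rho|U|^2}{2}-\rho u\cdot U-\frac{\gamma}{\gamma-1}R^{\gamma-1}\rho+R^\gamma\Big]dx.
\end{equation*}
The first bracket is bounded by the local energy inequality~\eqref{Emvsenergy} applied with test function $\phi$. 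For each of the four remaining pieces I would produce a time-integral identity by testing the weak formulation~\eqref{Emass_momentum} against: (a)~the scalar $\phi|U|^2/2\in C_c^1$ (continuity equation, yielding $\int\phi\rho|U|^2/2$); (b)~the vector field $\phi U\in C_c^1$ (momentum equation, yielding $-\int\phi\rho u\cdot U$); (c)~the scalar $-\tfrac{\gamma}{\gamma-1}\phi R^{\gamma-1}\in C_c^1$ (continuity equation, yielding $-\tfrac{\gamma}{\gamma-1}\int\phi R^{\gamma-1}\rho$). The purely smooth term $\int\phi R^\gamma\,dx$ is time-differentiated directly, using $\partial_tR^\gamma=-\gamma R^{\gamma-1}\diverg(RU)$ from the pointwise mass equation.

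Because the two solutions share initial data on $\Ocal\cap\{t=0\}$, all initial-time boundary contributions assemble into $E_{rel}^\phi(0)$, which vanishes identically: the bracket $\rho^\gamma-\gamma R^{\gamma-1}\rho+(\gamma-1)R^\gamma$ equals zero when $\rho=R$. The time derivatives of $R$ and $U$ arising from (a)--(c) are next eliminated using the pointwise Euler equations for $(R,U)$, in particular the non-conservative momentum form $R(\partial_tU+U\cdot\nabla U)+\nabla R^\gamma=0$, which follows from the conservative form in combination with the mass equation; useful tensor identities are $\diverg(U\otimes U)=\nabla UU+U\diverg U$ and $\nabla R^\gamma=\gamma R^{\gamma-1}\nabla R$. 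A routine but bulky rearrangement then assembles every term carrying $\partial_t\phi$ into the first line of \eqref{comprelenergy} and every term carrying $\nabla\phi$ into the second, while the remaining $\phi$-only volume terms collapse to
\begin{equation*}
-\int_0^\tau\!\int_\Omega\phi\,\rho(u-U)\cdot\nabla U(u-U)\,dxdt-\int_0^\tau\!\int_\Omega\phi\,\big[\rho^\gamma-\gamma R^{\gamma-1}\rho+(\gamma-1)R^\gamma\big]\diverg U\,dxdt,
\end{equation*}
each of which is pointwise controlled by a multiple of the density of $E_{rel}^\phi$---the second via the convexity inequality $\rho^\gamma-\gamma R^{\gamma-1}\rho+(\gamma-1)R^\gamma\geq 0$ valid for $\gamma>1$---yielding the desired Gronwall term $2\int_0^\tau\norm{U(t)}_{C^1(\overline{\Ocal})}E_{rel}^\phi(t)\,dt$.

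The main obstacle I anticipate is precisely this final bookkeeping. Several mixed pressure contributions---coming from the momentum equation tested against $\phi U$, from testing the mass equation against $\phi R^{\gamma-1}$, and from the direct differentiation of $\int\phi R^\gamma\,dx$---must be reshuffled by integration by parts so as to produce exactly the flux $-\tfrac{\gamma}{\gamma-1}(R^{\gamma-1}-\rho^{\gamma-1})\rho u+(R^\gamma-\rho^\gamma)U$ inside the $\nabla\phi$ integral. Unlike the incompressible case, no divergence-free simplification is available, so one must carefully verify that every stray term either falls into a $\partial_t\phi$ or $\nabla\phi$ bucket or is absorbable by the relative-energy density; that this all fits together is the structural reason why the Dafermos method succeeds here.
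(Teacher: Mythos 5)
Your proposal follows essentially the same route as the paper's proof: testing the momentum equation with $\phi U$ and the mass equation with $\tfrac12\phi|U|^2$ and (a multiple of) $\phi R^{\gamma-1}$, invoking the local energy inequality for the weak solution, handling $\int\phi R^\gamma\,dx$ and the time derivatives of $(R,U)$ via the pointwise mass and (non-conservative) momentum equations, and absorbing the surviving $\phi$-only terms --- which you correctly identify as the quadratic term $\phi\rho\,\nabla U:((U-u)\otimes(u-U))$ and the potential term $\bigl[\rho^\gamma-\gamma R^{\gamma-1}\rho+(\gamma-1)R^\gamma\bigr]\diverg U$ --- into the Gronwall term. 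The ``bulky rearrangement'' you defer is exactly the bookkeeping the paper carries out explicitly, and it assembles into the stated $\partial_t\phi$- and $\nabla\phi$-fluxes as you predict, so the plan is sound.
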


\begin{proof}
We set $\eta=\phi U$ in the momentum equation (the second equation of~\eqref{Emass_momentum}) in order to obtain 
\begin{equation}\label{Etest1}
\begin{aligned}
\int_{\Omega}{\rho u}\cdot \phi U(\tau)dx=&\int_{\Omega}\rho^0\phi(0)|u^0|^2dx+\int_0^\tau\int_{\Omega}{\rho u}\cdot\partial_t(\phi U)+{(\rho u\otimes u)}:\nabla (\phi U) dxdt\\
&+\int_0^\tau\int_{\Omega}{\rho^\gamma}\diverg (\phi U) dxdt\\
&=\int_{\Omega}\rho^0\phi(0)|u^0|^2dx+\int_0^\tau\int_{\Omega}{\rho u}\cdot\phi \partial_tU+{(\rho u\otimes u)}:\phi \nabla U dxdt\\
&+\int_0^\tau\int_{\Omega}{\rho^\gamma} \phi \diverg U dxdt\\
&+\int_0^\tau\int_{\Omega}{\rho u}\cdot\partial_t\phi U+{(\rho u\otimes u)}:(\nabla \phi\otimes U) dxdt+\int_0^\tau\int_{\Omega}{\rho^\gamma} \nabla \phi \cdot U dxdt.\\
\end{aligned}
\end{equation}
Likewise, setting $\psi=\frac{1}{2}\phi|U|^2$ and then $\psi=\gamma\phi R^{\gamma-1}$ in the first equation of~\eqref{Emass_momentum} yields
\begin{equation}\label{Etest2}
\begin{aligned}
\frac{1}{2}\int_{\Omega}\phi(\tau)|U(\tau)|^2{\rho}(\tau,x)dx=\int_0^\tau\int_{\Omega}&\phi U\cdot\partial_tU {\rho}+\phi\rho\nabla  U u\cdot Udxdt\\
&+\frac{1}{2}\int_0^\tau\int_{\Omega}\partial_t\phi |U|^2 {\rho}+|U|^2\nabla\phi\cdot{\rho u}dxdt\\
&+\int_{\Omega}\frac{1}{2}\phi(0)|u^0|^2\rho^0dx
\end{aligned}
\end{equation}
and
\begin{equation}\label{Etest3}
\begin{aligned}
\int_{\Omega}\gamma \phi(\tau) R^{\gamma-1}(\tau){\rho}(\tau)dx=&\int_0^\tau\int_{\Omega}\gamma(\gamma-1)\phi R^{\gamma-2}\partial_tR {\rho}+\gamma(\gamma-1)\phi R^{\gamma-2}\nabla R \cdot{\rho u}dxdt\\
&+\int_0^\tau\int_{\Omega}\gamma R^{\gamma-1}\partial_t\phi {\rho}+\gamma R^{\gamma-1}\nabla \phi \cdot{\rho u}dxdt\\
&+\int_{\Omega}\gamma\phi(0) (\rho^0)^\gamma dx,
\end{aligned}
\end{equation}
respectively. 

Then, write the relative energy as
\begin{equation*}
\begin{aligned}
E^\phi_{rel}(\tau)&=\int_{\Omega}\frac{1}{2}\phi{\rho|u|^2}+\frac{1}{\gamma-1}\phi{\rho^\gamma}dx + \int_{\Omega}\phi R^\gamma dx+\frac{1}{2}\int_{\Omega}\phi|U|^2{\rho}dx-\int_{\Omega}\phi U\cdot {\rho u}dx-\int_{\Omega}\frac{\gamma}{\gamma-1}\phi R^{\gamma-1}{\rho}dx\\
&=E^\phi(\tau)+ \int_{\Omega}\phi R^\gamma dx+\frac{1}{2}\int_{\Omega}\phi|U|^2{\rho}dx-\int_{\Omega}\phi U\cdot{\rho u}dx-\int_{\Omega}\frac{\gamma}{\gamma-1}\phi R^{\gamma-1}\rho dx
\end{aligned}
\end{equation*}
(all integrands taken at time $\tau$). Now, using the balances~\eqref{Etest1},~\eqref{Etest2},~\eqref{Etest3} for the last three integrals, we obtain
\begin{equation*}
\begin{aligned}
E^\phi_{rel}(\tau)=E^\phi(\tau)&+\int_{\Omega}\phi R^\gamma dx\\
&+\int_0^\tau\int_{\Omega}\phi U\cdot\partial_tU \rho+\phi\rho\nabla  U u\cdot Udxdt+\int_{\Omega}\frac{1}{2}\phi(0)|u^0|^2\rho^0dx\\ 
&+\frac12\int_0^\tau\int_{\Omega}\partial_t\phi |U|^2 \rho+|U|^2\nabla \phi\cdot{\rho u}dxdt\\ 
&-\int_{\Omega}\phi(0)\rho^0|u^0|^2dx-\int_0^\tau\int_{\Omega}{\rho u}\cdot\phi\partial_t U+{(\rho u\otimes u)}:\phi\nabla U dxdt\\
&-\int_0^\tau\int_{\Omega}\phi{\rho^\gamma}\diverg U dxdt\\
&-\int_0^\tau\int_{\Omega}{\rho u}\cdot\partial_t\phi U+{(\rho u\otimes u)}:(\nabla \phi\otimes U) dxdt-\int_0^\tau\int_{\Omega}{\rho^\gamma} \nabla \phi \cdot U dxdt\\
&-\int_0^\tau\int_{\Omega}\gamma\phi R^{\gamma-2}\partial_tR {\rho}+\gamma\phi R^{\gamma-2}\nabla R \cdot{\rho u}dxdt\\
&-\int_0^\tau\int_{\Omega}\frac{\gamma}{\gamma-1} R^{\gamma-1}\partial_t\phi {\rho}+\frac{\gamma}{\gamma-1} R^{\gamma-1}\nabla \phi \cdot{\rho u}dxdt\\
&-\int_{\Omega}\frac{\gamma}{\gamma-1}\phi(0) (\rho^0)^\gamma dx,
\end{aligned}
\end{equation*}
and using~\eqref{Emvsenergy} we see, for a.e.\ $\tau$,
\begin{equation}\label{Eintermediatestep}
\begin{aligned}
E^\phi_{rel}(\tau)\leq &\int_{\Omega}\phi R^\gamma dx\\
&+\int_0^\tau\int_{\Omega}\phi U\cdot\partial_tU \rho+\phi\rho\nabla  U u\cdot Udxdt\\ 
&+\frac12\int_0^\tau\int_{\Omega}\partial_t\phi |U|^2 \rho+|U|^2\nabla \phi\cdot{\rho u}dxdt\\ 
&-\int_0^\tau\int_{\Omega}{\rho u}\cdot\phi\partial_t U+{(\rho u\otimes u)}:\phi\nabla U dxdt\\
&-\int_0^\tau\int_{\Omega}\phi{\rho^\gamma}\diverg U dxdt\\
&-\int_0^\tau\int_{\Omega}{\rho u}\cdot\partial_t\phi U+{(\rho u\otimes u)}:(\nabla \phi\otimes U) dxdt-\int_0^\tau\int_{\Omega}{\rho^\gamma} \nabla \phi \cdot U dxdt\\
&-\int_0^\tau\int_{\Omega}\gamma\phi R^{\gamma-2}\partial_tR {\rho}+\gamma\phi R^{\gamma-2}\nabla R \cdot{\rho u}dxdt\\
&-\int_0^\tau\int_{\Omega}\frac{\gamma}{\gamma-1} R^{\gamma-1}\partial_t\phi {\rho}+\frac{\gamma}{\gamma-1} R^{\gamma-1}\nabla \phi \cdot{\rho u}dxdt\\
&-\int_{\Omega}\phi(0) (\rho^0)^\gamma dx\\
&+\int_0^\tau\int_\Omega\partial_t\phi\left(\frac{\rho|u|^2}{2}+\frac{1}{\gamma-1}\rho^\gamma\right)+\left(\frac{\rho|u|^2}{2}+\frac{\gamma}{\gamma-1}\rho^\gamma\right)\nabla\phi\cdot u dxdt
\end{aligned}
\end{equation}
Let us collect some terms and write
\begin{equation}\label{Eequality1}
\begin{aligned}
\int_{\Omega}&\phi R^\gamma dx -\int_{\Omega}\phi(0)(\rho^0)^\gamma dx-\int_0^\tau\int_{\T^d}\gamma\phi R^{\gamma-2}\partial_tR \rho dxdt\\
&=\int_0^\tau\int_{\Omega}\phi\partial_t(R^\gamma)-\gamma \phi R^{\gamma-2}\partial_tR \rho +\partial_t\phi R^\gamma dxdt\\
&=\int_0^\tau\int_{\Omega}\gamma\phi R^{\gamma-1}\partial_tR-\gamma\phi R^{\gamma-2}\partial_tR \rho +\partial_t\phi R^\gamma dxdt\\
&=\int_0^\tau\int_{\Omega}\gamma\phi R^{\gamma-2}\partial_tR(R-\rho)+\partial_t\phi R^\gamma dxdt
\end{aligned}
\end{equation}
and
\begin{equation}\label{Eequality3}
\begin{aligned}
\int_0^\tau\int_{\Omega}&\phi U\cdot\partial_tU \rho+\phi\rho\nabla  U u\cdot U-\phi{\rho u}\cdot\partial_t U-{\phi(\rho u\otimes u)}:\nabla U dxdt\\
&=\int_0^\tau\int_{\Omega}\phi\partial_tU\cdot {\rho (U-u)}+\phi\nabla U :{(\rho (U-u)\otimes u)}dxdt.
\end{aligned}
\end{equation}

Insert equalities~\eqref{Eequality1} and~\eqref{Eequality3} into~\eqref{Eintermediatestep} to arrive at
\begin{equation}\label{Eintermediatestep2}
\begin{aligned}
E^\phi_{rel}(\tau)\leq &\int_0^\tau\int_{\Omega}\gamma\phi R^{\gamma-2}\partial_tR(R-\rho)+\partial_t\phi R^\gamma dxdt\\
&+\int_0^\tau\int_{\Omega}\phi\partial_tU\cdot {\rho (U-u)}+\phi\nabla U :{(\rho (U-u)\otimes u)}dxdt\\
&+\frac12\int_0^\tau\int_{\Omega}\partial_t\phi |U|^2 \rho+|U|^2\nabla \phi\cdot{\rho u}dxdt\\ 
&-\int_0^\tau\int_{\Omega}\phi{\rho^\gamma}\diverg U dxdt\\
&-\int_0^\tau\int_{\Omega}{\rho u}\cdot\partial_t\phi U+{(\rho u\otimes u)}:(\nabla \phi\otimes U) dxdt-\int_0^\tau\int_{\Omega}{\rho^\gamma} \nabla \phi \cdot U dxdt\\
&-\int_0^\tau\int_{\Omega}\gamma\phi R^{\gamma-2}\nabla R \cdot{\rho u}dxdt\\
&-\int_0^\tau\int_{\Omega}\frac{\gamma}{\gamma-1} R^{\gamma-1}\partial_t\phi {\rho}+\frac{\gamma}{\gamma-1} R^{\gamma-1}\nabla \phi \cdot{\rho u}dxdt\\
&+\int_0^\tau\int_\Omega\partial_t\phi\left(\frac{\rho|u|^2}{2}+\frac{1}{\gamma-1}\rho^\gamma\right)+\left(\frac{\rho|u|^2}{2}+\frac{\gamma}{\gamma-1}\rho^\gamma\right)\nabla\phi\cdot u dxdt
\end{aligned}
\end{equation}

Furthermore, using the divergence theorem, we have
\begin{equation*}
\begin{aligned}
-\int_0^\tau\int_{\Omega}&\phi{\rho^\gamma}\diverg U dxdt-\int_0^\tau\int_{\Omega}\gamma\phi R^{\gamma-2}\nabla R \cdot{\rho u}dxdt\\
&=\int_0^\tau\int_{\Omega}-\phi{\rho^\gamma}\diverg U +\gamma \phi R^{\gamma-2}\nabla R \cdot(RU-{\rho u})-\gamma\phi R^{\gamma-2}\nabla R\cdot RUdxdt\\
&=\int_0^\tau\int_{\Omega}\phi(R^\gamma-{\rho^\gamma})\diverg U +\gamma\phi R^{\gamma-2}\nabla R \cdot(RU-{\rho u})+R^\gamma\nabla\phi\cdot Udxdt.\\
\end{aligned}
\end{equation*}
Inserting this back into~\eqref{Eintermediatestep2} and observing that, by the mass equation for $(R,U)$, 
\begin{equation*}
\begin{aligned}
\gamma &\phi R^{\gamma-2}\partial_tR(R-\rho)+\gamma\phi R^{\gamma-2}\diverg UR(R-\rho) +\gamma\phi R^{\gamma-2}\nabla R \cdot RU
=\gamma\phi R^{\gamma-2}U\cdot\nabla R\rho,
\end{aligned}
\end{equation*}
we obtain
\begin{equation}\label{Eintermediatestep3}
\begin{aligned}
E^\phi_{rel}(\tau)\leq &\int_0^\tau\int_{\Omega}\partial_t\phi R^\gamma dxdt\\
&+\int_0^\tau\int_{\Omega}\phi\partial_tU\cdot {\rho (U-u)}+\phi\nabla U :{(\rho (U-u)\otimes u)}dxdt\\
&+\frac12\int_0^\tau\int_{\Omega}\partial_t\phi |U|^2 \rho+|U|^2\nabla \phi\cdot{\rho u}dxdt\\ 
&+\int_0^\tau\int_{\Omega}\phi(R^\gamma-{\rho^\gamma})\diverg U dxdt+\int_0^\tau\int_{\Omega}R^\gamma\nabla\phi\cdot Udxdt\\
&-\int_0^\tau\int_{\Omega}{\rho u}\cdot\partial_t\phi U+{(\rho u\otimes u)}:(\nabla \phi\otimes U) dxdt-\int_0^\tau\int_{\Omega}{\rho^\gamma} \nabla \phi \cdot U dxdt\\
&+\int_0^\tau\int_\Omega \gamma \phi R^{\gamma-2}\rho\nabla R\cdot(U-u)-\gamma\phi R^{\gamma-1}\diverg U (R-\rho)dxdt\\
&-\int_0^\tau\int_{\Omega}\frac{\gamma}{\gamma-1} R^{\gamma-1}\partial_t\phi {\rho}+\frac{\gamma}{\gamma-1} R^{\gamma-1}\nabla \phi \cdot{\rho u}dxdt\\
&+\int_0^\tau\int_\Omega\partial_t\phi\left(\frac{\rho|u|^2}{2}+\frac{1}{\gamma-1}\rho^\gamma\right)+\left(\frac{\rho|u|^2}{2}+\frac{\gamma}{\gamma-1}\rho^\gamma\right)\nabla\phi\cdot u dxdt.
\end{aligned}
\end{equation}

The expression in the second line is rewritten as 
\begin{equation}\label{Epointwiseest}
\begin{aligned}
\phi\partial_tU&\cdot {\rho(U-u)}+\phi\nabla U :{(\rho (U-u)\otimes u)}\\
&=\phi\partial_tU\cdot {\rho(U-u)}+\phi\nabla U :{(\rho (U-u)\otimes U)}+\phi\nabla U :{(\rho(U-u)\otimes(u-U))},
\end{aligned}
\end{equation}
and the integral of the last expression as well as the sum of the two integrals involving $\diverg U$ in~\eqref{Eintermediatestep3} can be estimated by
\begin{equation}\label{Eabsorb}
C\norm{U}_{C^1}\int_0^\tau E^\phi_{rel}(t)dt.
\end{equation}
For the other terms in~\eqref{Epointwiseest} we get, invoking the momentum equation for $(R,U)$,
\begin{equation}\label{Emomentum}
\begin{aligned}
\phi\partial_tU&\cdot {\rho(U-u)}+\phi\nabla U :(\rho(U-u)\otimes{U})\\
&=\phi\frac{1}{R}(\partial_t(RU)+\diverg(RU\otimes U))\cdot{\rho(U-u)}\\
&= - \gamma\phi R^{\gamma-2}\nabla R\cdot{\rho(U-u)}.
\end{aligned}
\end{equation}
Putting together~\eqref{Eintermediatestep3},~\eqref{Eabsorb}, and~\eqref{Emomentum}, we obtain
\begin{equation*}
E_{rel}(\tau)\leq C\norm{U}_{C^1}\int_0^\tau E_{rel}(t)dt+\int_0^\tau \int_\Omega a(x,t)\partial_t\phi(x,t)+b(x,t)\cdot\nabla\phi(x,t)dxdt,
\end{equation*}
where
\begin{equation*}
\begin{aligned}
a(x,t)&=R^\gamma+\frac12\rho|U|^2-\rho u\cdot U-\frac{\gamma}{\gamma-1}\rho R^{\gamma-1}+\frac12 \rho|u|^2+\frac{1}{\gamma-1}\rho^\gamma\\
&=\frac12\rho|u-U|^2+R^\gamma-\frac{\gamma}{\gamma-1}\rho R^{\gamma-1}+\frac{1}{\gamma-1}\rho^\gamma
\end{aligned}
\end{equation*}
and
\begin{equation*}
\begin{aligned}
b(x,t)&=\frac12|U|^2\rho u+R^\gamma U-\rho u\otimes uU-\rho^\gamma U-\frac{\gamma}{\gamma-1}R^{\gamma-1}\rho u+\frac12\rho|u|^2u+\frac{\gamma}{\gamma-1}\rho^\gamma u\\
&=\frac12\rho|u-U|^2u-\frac{\gamma}{\gamma-1}(R^{\gamma-1}-\rho^{\gamma-1})\rho u+(R^\gamma-\rho^\gamma)U.
\end{aligned}
\end{equation*}
\end{proof}


\section{Applications}\label{appl}
\subsection{The incompressible case}
\begin{corollary}[of Theorem \ref{basicthm}]\label{incompcor}
Under the hypotheses of Theorem~\ref{basicthm},
\begin{equation*}
\liminf_{\tau\searrow 0}E^\phi_{rel}(\tau)=0.
\end{equation*}
\end{corollary}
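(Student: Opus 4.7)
The plan is to apply the inequality provided by Theorem~\ref{basicthm} and to verify that its right-hand side tends to zero as $\tau\searrow 0$. Since $E^\phi_{rel}\geq 0$ and the bound holds only for almost every $\tau\in(0,T)$, one then picks a decreasing sequence of ``good'' times $\tau_n\searrow 0$ to conclude.

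In detail, I would verify that each of the three integrals on the right-hand side of Theorem~\ref{basicthm} is an absolutely continuous function of $\tau$ vanishing at $\tau=0$, which amounts to checking that each integrand lies in $L^1$ on the (compact) support of $\phi$. For the first, $\partial_t\phi$ is bounded while $|U-u|^2\leq 2|U|^2+2|u|^2$; boundedness of $U$ on $\overline{\Ocal}$ together with $u\in L^3_{\loc}(\Ocal)$ gives integrability via H\"older. The second integral involves $\tfrac{|U-u|^2}{2}u$, integrable on $\supp\phi$ by the same reasoning, and $(P-p)(U-u)$, which is integrable via the H\"older pair $(L^3,L^{3/2})$ applied to $u$ and $p$ together with the boundedness of $U,P\in C^1(\overline{\Ocal})$. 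Finally, in the Gr\"onwall-type third term, the factor $\|\nabla_{\sym}U(t)\|_{L^\infty(\Ocal)}$ is uniformly bounded in $t$, and $E^\phi_{rel}\in L^1((0,T))$ by the very first estimate, so this term also vanishes as $\tau\searrow 0$.

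Putting this together, the whole right-hand side of the bound in Theorem~\ref{basicthm}, call it $F(\tau)$, tends to zero as $\tau\searrow 0$ by absolute continuity of the Lebesgue integral. Since $0\leq E^\phi_{rel}(\tau)\leq F(\tau)$ for a.e.\ $\tau\in(0,T)$ with $F(\tau)\to 0$, choosing a sequence $\tau_n\searrow 0$ of such good times yields $E^\phi_{rel}(\tau_n)\to 0$ and hence $\liminf_{\tau\searrow 0}E^\phi_{rel}(\tau)=0$. I do not foresee any serious obstacle here; the only reason the statement is formulated with liminf rather than limit is that $E^\phi_{rel}(\tau)$ is a priori defined only for a.e.\ $\tau\in(0,T)$, and there is no reason it should be right-continuous at $0$.
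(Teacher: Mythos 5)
Your proof is correct, but it takes a genuinely different route from the paper's. You argue directly: each term on the right-hand side of Theorem~\ref{basicthm} is $\int_0^\tau$ of an $L^1$ function of $t$ — the $\partial_t\phi$ and $\nabla\phi$ terms because $\partial_t\phi,\nabla\phi,U,P$ are bounded on the compact set $\supp\phi$ while $u\in L^3(\supp\phi)$ and $p\in L^{3/2}(\supp\phi)$, and the Gronwall-type term because $E^\phi_{rel}\in L^1(0,T)$ (which, as a cosmetic remark, is really the same computation with $\phi$ in place of $\partial_t\phi$, not literally your ``first estimate''). Hence the right-hand side vanishes as $\tau\searrow 0$ by absolute continuity, and sandwiching along a sequence of good times gives the claim — in fact the stronger statement that $E^\phi_{rel}(\tau)\to 0$ as $\tau\searrow 0$ outside a null set. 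The paper instead argues by contradiction: if the liminf were positive, then $E^\phi_{rel}\geq\delta$ a.e.\ near $0$, the space--time flux integral can then be dominated by $\frac1\delta\int_0^\tau\norm{g(t)}_{L^1}E^\phi_{rel}(t)\,dt$, and Gronwall's inequality forces $E^\phi_{rel}\equiv 0$, a contradiction. Both arguments rest on exactly the same integrability of the flux integrand on $\supp\phi$, and both implicitly use that $t\mapsto\norm{\nabla_{\sym}U(t)}_{L^\infty(\Ocal)}$ is integrable near $0$ (you assume it bounded, the paper puts it into $G\in L^1$). Your version is more elementary and gives a marginally stronger conclusion; the paper's version has the virtue of exhibiting the absorb-and-Gronwall structure that is the one which actually upgrades to a full local weak--strong uniqueness statement in the compressible case, where Lemma~\ref{ABestimate} allows the flux to be controlled pointwise by the relative energy (Theorem~\ref{locweakstrong}) — a control that is unavailable here because of the nonlocal pressure, which is precisely why only the liminf statement survives in the incompressible setting.
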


\begin{proof}
Suppose, in contrast, $\liminf_{\tau\searrow 0}E^\phi_{rel}(\tau)>0$, so that there exists a time $T_0>0$ and $\delta>0$ such that $E^\phi_{rel}(\tau)\geq\delta$ for almost all $\tau<T_0$. Since, by assumption, 
\begin{equation*}
g(x,t):=\frac12\partial_t\phi |U-u|^2dxdt+\nabla\phi\cdot\left[\frac{|U-u|^2}{2}u+(P-p)(U-u)\right] \in L^1(\R^d\times(0,T_0)),
\end{equation*}
by Theorem \ref{basicthm} there is a constant $C(\delta)$ such that
\begin{equation*}
\begin{aligned}
E^\phi_{rel}(\tau)&\leq C(\delta) \int_0^\tau \left[\delta\int_{\R^d}g(x,t)dx+\norm{\nabla_{sym}U(t)}_{L^\infty}E^\phi_{rel}(t)\right] dt\\
&\leq\int_0^\tau G(t) E^\phi_{rel}(t)dt\quad\text{for almost every $\tau\in(0,T_0)$,}
\end{aligned}
\end{equation*} 
where $G\in L^1(0,T_0)$. Gronwall's inequality now implies $E^\phi_{rel}\equiv0$, which yields the desired contradiction.
\end{proof}

\subsection{The compressible case}
First, we obtain a similar result as the previous corollary for the isentropic system:
\begin{corollary}[of Theorem \ref{Eweak-strong}]\label{compcor}
Under the hypotheses of Theorem~\ref{Eweak-strong},
\begin{equation*}
\liminf_{\tau\searrow 0}E^\phi_{rel}(\tau)=0.
\end{equation*}
\end{corollary}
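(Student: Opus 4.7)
The argument should mirror the proof of Corollary~\ref{incompcor}, using Theorem~\ref{Eweak-strong} in place of Theorem~\ref{basicthm} together with the formulas for the $a$ and $b$ coefficients derived at the very end of that proof. So the plan is to argue by contradiction: suppose $\liminf_{\tau\searrow 0}E^\phi_{rel}(\tau)>0$, so that one can fix $\delta>0$ and $T_0\in(0,T]$ for which $E^\phi_{rel}(\tau)\geq\delta$ for almost every $\tau\in(0,T_0)$.

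The key preparatory step is to verify that the function
\begin{equation*}
h(t):=\int_\Omega\bigl[a(x,t)\partial_t\phi(x,t)+b(x,t)\cdot\nabla\phi(x,t)\bigr]\,dx
\end{equation*}
belongs to $L^1(0,T_0)$, where $a$ and $b$ are the quantities computed at the end of the proof of Theorem~\ref{Eweak-strong}. Since $\supp\phi$ is compact in $\Omega\times[0,T]$ and $(R,U)\in C^1(\overline{\mathcal{O}})$ is bounded together with $R^{\gamma-1}$ and $R^\gamma$, the terms in $a$ and $b$ are pointwise dominated (up to constants depending on $\|(R,U)\|_{C^1}$) by sums of the quantities $\rho$, $\rho^\gamma$, $\rho|u|^2$, $\rho u$, $\rho^\gamma u$, and $\rho|u|^2 u$. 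By the definition of weak solution these lie in $L^1_{loc}$ on $\supp\phi$, and the remaining two (which involve $u$ cubed) are covered by the assumption that the local energy inequality~\eqref{Emvsenergy} is well-defined. This grants $h\in L^1(0,T_0)$.

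With integrability secured, Theorem~\ref{Eweak-strong} reads
\begin{equation*}
E^\phi_{rel}(\tau)\leq\int_0^\tau h(t)\,dt+2\int_0^\tau\|U(t)\|_{C^1(\overline{\mathcal{O}})}E^\phi_{rel}(t)\,dt,
\end{equation*}
and, exactly as in the proof of Corollary~\ref{incompcor}, the assumption $E^\phi_{rel}(t)\geq\delta$ allows us to estimate $\int_0^\tau h(t)\,dt\leq \delta^{-1}\int_0^\tau|h(t)|E^\phi_{rel}(t)\,dt$. Consequently
\begin{equation*}
E^\phi_{rel}(\tau)\leq\int_0^\tau G(t)\,E^\phi_{rel}(t)\,dt\quad\text{with } G(t):=\delta^{-1}|h(t)|+2\|U(t)\|_{C^1(\overline{\mathcal{O}})}\in L^1(0,T_0),
\end{equation*}
and Gronwall's inequality forces $E^\phi_{rel}\equiv 0$ on $(0,T_0)$, contradicting $E^\phi_{rel}\geq\delta$.

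I expect the only mildly delicate point to be the $L^1$-integrability of $h$: one must recognise that the terms $\tfrac12\rho|u-U|^2u$ and $(R^\gamma-\rho^\gamma)U$ in $b$ reduce, after expansion, to combinations of $\rho|u|^2u$, $\rho^\gamma u$, $\rho u$, etc., all of which are in $L^1_{loc}$ precisely because the solution is assumed to be energy-admissible. Everything else is the same Gronwall loop used for the incompressible analogue.
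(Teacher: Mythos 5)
Your proposal is correct and is essentially the paper's argument: the paper simply states that the proof is analogous to that of Corollary~\ref{incompcor}, i.e.\ the same contradiction-plus-Gronwall loop you carry out, with the $L^1$ bound on the flux terms coming from the integrability built into the definitions of weak solution and the local energy inequality~\eqref{Emvsenergy}. Your explicit check that $a$ and $b$ are dominated by $\rho$, $\rho u$, $\rho|u|^2$, $\rho^\gamma$, $\rho^\gamma u$, and $\rho|u|^2u$ is exactly the verification the paper leaves implicit.
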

The proof is analogous to the proof of Corollary~\ref{incompcor}.

For \emph{bounded} solutions, we can say much more. Let us denote
\begin{equation*}
\begin{aligned}
A(R,U;\rho,u)=\frac12\rho|u-U|^2+R^\gamma-\frac{\gamma}{\gamma-1}\rho R^{\gamma-1}+\frac{1}{\gamma-1}\rho^\gamma
\end{aligned}
\end{equation*}
and
\begin{equation*}
\begin{aligned}
B(R,U;\rho,u)=\frac12\rho|u-U|^2u-\frac{\gamma}{\gamma-1}(R^{\gamma-1}-\rho^{\gamma-1})\rho u+(R^\gamma-\rho^\gamma)U.
\end{aligned}
\end{equation*}

\begin{lemma}\label{ABestimate}
Let $0\leq\underline{r}<\overline{r}<\infty$ and $v>0$. If $\gamma<2$, assume in addition $\underline{r}>0$. Then there exists a number $C$ depending only on $\underline{r}$, $\overline{r}$, $v$, and $\gamma>1$ such that
\begin{equation*}
|B(R,U;\rho,u)|\leq C A(R,U;\rho,u)
\end{equation*}
whenever $R,\rho\in[\underline{r},\overline{r}]$ and $|U|, |u|\leq v$.
\end{lemma}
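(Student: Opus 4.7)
The first step is to expose the relative pressure potential $\mathcal{P}(\rho|R):=R^\gamma-\frac{\gamma}{\gamma-1}R^{\gamma-1}\rho+\frac{1}{\gamma-1}\rho^\gamma$ (i.e.\ the non-kinetic part of $A$) inside $B$. Substituting $u=U+(u-U)$ in the middle term of $B$ and collecting terms produces the algebraic identity
\begin{equation*}
B(R,U;\rho,u)=\tfrac12\rho|u-U|^2\,u+\mathcal{P}(\rho|R)\,U-\frac{\gamma}{\gamma-1}(R^{\gamma-1}-\rho^{\gamma-1})\,\rho\,(u-U).
\end{equation*}
Since $|u|,|U|\leq v$ and $A=\tfrac12\rho|u-U|^2+\mathcal{P}(\rho|R)$, the first two terms of this decomposition are directly bounded by $vA$. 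For the third I would apply Young's inequality after the splitting $\rho=\sqrt\rho\cdot\sqrt\rho$, obtaining
\begin{equation*}
|R^{\gamma-1}-\rho^{\gamma-1}|\,\rho\,|u-U|\leq\tfrac12\rho|u-U|^2+\tfrac12\rho\,(R^{\gamma-1}-\rho^{\gamma-1})^2;
\end{equation*}
the first summand is again absorbed by $A$, so the entire claim reduces to the scalar pointwise inequality $\rho(R^{\gamma-1}-\rho^{\gamma-1})^2\leq C\,\mathcal{P}(\rho|R)$ for $R,\rho\in[\underline{r},\overline{r}]$.

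To prove this I would view $\mathcal{P}(\rho|R)$ as the Bregman distance of $H(s)=s^\gamma/(\gamma-1)$, so that $\mathcal{P}(\rho|R)=(\rho-R)^2\int_0^1(1-t)H''(R+t(\rho-R))\,dt$ with $H''(s)=\gamma s^{\gamma-2}$. When $\underline{r}>0$, $H''$ is bounded from above and below on $[\underline{r},\overline{r}]$, so $\mathcal{P}(\rho|R)\geq c(\rho-R)^2$; combined with the Lipschitz estimate $|R^{\gamma-1}-\rho^{\gamma-1}|\leq C|R-\rho|$ on $[\underline{r},\overline{r}]$ and $\rho\leq\overline{r}$, this closes the case $\underline{r}>0$.

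The delicate case is $\gamma\geq 2$ with $\underline{r}=0$, where $H''$ degenerates at the origin and $\mathcal{P}$ loses uniform quadratic coercivity. Setting $M:=\max(R,\rho)$, I split the analysis into a comparable and a non-comparable regime. When $\min(R,\rho)\geq M/2$, one has $H''(\tau)\sim M^{\gamma-2}$ on the interval joining $R$ and $\rho$, so $\mathcal{P}(\rho|R)\gtrsim M^{\gamma-2}(R-\rho)^2$; the mean value theorem gives $|R^{\gamma-1}-\rho^{\gamma-1}|\lesssim M^{\gamma-2}|R-\rho|$ (valid because $\gamma\geq 2$ makes $s\mapsto s^{\gamma-1}$ Lipschitz on $[0,\overline{r}]$), whence $\rho(R^{\gamma-1}-\rho^{\gamma-1})^2\lesssim M^{2\gamma-3}(R-\rho)^2\lesssim M^{\gamma-1}\mathcal{P}(\rho|R)\leq\overline{r}^{\gamma-1}\mathcal{P}(\rho|R)$. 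When $\min(R,\rho)\leq M/2$, say $\rho\geq 2R$ (the case $R\geq 2\rho$ is analogous), I factor $\mathcal{P}(\rho|R)=\rho^\gamma h(R/\rho)$ with $h(s):=s^\gamma-\frac{\gamma}{\gamma-1}s^{\gamma-1}+\frac{1}{\gamma-1}$; a direct study of $h'(s)=\gamma s^{\gamma-2}(s-1)$ shows that $h$ is decreasing on $[0,1]$ from $h(0)=1/(\gamma-1)$ to $h(1)=0$, so $h(R/\rho)\geq h(1/2)=:c_\gamma>0$, and hence $\mathcal{P}(\rho|R)\gtrsim\rho^\gamma=M^\gamma$; combined with the trivial bound $\rho(R^{\gamma-1}-\rho^{\gamma-1})^2\leq M^{2\gamma-1}$ the ratio is again $\lesssim M^{\gamma-1}\leq\overline{r}^{\gamma-1}$. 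I expect this non-comparable subcase at the vacuum to be the principal obstacle, and it is exactly here that the hypothesis $\gamma\geq 2$ enters: precisely then does $s\mapsto s^{\gamma-1}$ remain Lipschitz down to $s=0$, and does the relative entropy $\mathcal{P}$ suffice to dominate the density-difference term.
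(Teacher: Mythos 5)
Your proof is correct, and up to the reduction it follows the same strategy as the paper: both expose the relative pressure potential $\mathcal{P}(\rho|R)=R^\gamma-\frac{\gamma}{\gamma-1}\rho R^{\gamma-1}+\frac{1}{\gamma-1}\rho^\gamma$ inside $B$ (the paper via the triangle inequality rather than your exact identity), absorb $\tfrac12\rho|u-U|^2|u|$ and $\mathcal{P}\,|U|$ into $vA$, and use Young's inequality to reduce the lemma to a scalar comparison of $(R^{\gamma-1}-\rho^{\gamma-1})^2$ (you keep the weight $\rho$, the paper discards it via $\rho\leq\overline{r}$) with $\mathcal{P}$. The endgame differs: the paper invokes the uniform quadratic coercivity $\mathcal{P}\geq c(R-\rho)^2$ together with the bound $(R^{\gamma-1}-\rho^{\gamma-1})^2=R^{2\gamma-4}\bigl(\frac{1-\lambda^{\gamma-1}}{1-\lambda}\bigr)^2(R-\rho)^2\leq C(R-\rho)^2$, $\lambda=\rho/R$, whereas you run a two-regime analysis in $M=\max(R,\rho)$ with the degenerate coercivity $\mathcal{P}\gtrsim M^{\gamma-2}(R-\rho)^2$ in the comparable regime and $\mathcal{P}\gtrsim M^\gamma$ otherwise. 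Your version is in fact the more robust treatment of the vacuum case: for $\underline{r}=0$ and $\gamma>2$ the uniform bound $\mathcal{P}\geq c(R-\rho)^2$ fails (take $R=0$, $\rho\to0$, where $\mathcal{P}=\frac{1}{\gamma-1}\rho^\gamma\ll\rho^2$), so the paper's chain only closes because the compensating factor $R^{2\gamma-4}$ offsets this degeneracy, a bookkeeping step your $M$-weighted splitting performs explicitly, while also isolating exactly where $\gamma\geq2$ is used. One small point to repair: in the non-comparable regime with $R\geq 2\rho$, the factorisation $\mathcal{P}=\rho^\gamma h(R/\rho)$ with $h$ bounded below by a constant only yields $\mathcal{P}\gtrsim\rho^\gamma$, which is not $M^\gamma$; you should instead factor out the larger density, $\mathcal{P}=R^\gamma g(\rho/R)$ with $g(s)=1-\frac{\gamma}{\gamma-1}s+\frac{1}{\gamma-1}s^\gamma$ decreasing from $1$ to $0$ on $[0,1]$, so that $\mathcal{P}\geq g(1/2)R^\gamma=g(1/2)M^\gamma$ — a one-line fix that completes the "analogous" case.
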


\begin{proof}
Since $|u|\leq v$ and the potential part of $A(R,U;\rho,u)$ is nonnegative, one sees immediately that 
\begin{equation*}
\left|\frac12\rho|u-U|^2u\right| \leq C A(R,U;\rho,u).
\end{equation*}
For the other terms, we write
\begin{equation*}
\begin{aligned}
&\left|-\frac{\gamma}{\gamma-1}(R^{\gamma-1}-\rho^{\gamma-1})\rho u+(R^\gamma-\rho^\gamma)U\right|\\
&\leq \left|-\frac{\gamma}{\gamma-1}(R^{\gamma-1}-\rho^{\gamma-1})\rho U+(R^\gamma-\rho^\gamma)U\right|+\frac{\gamma}{\gamma-1}\left|(R^{\gamma-1}-\rho^{\gamma-1})\rho (U-u)\right|\\
&=  \left(R^\gamma-\frac{\gamma}{\gamma-1}\rho R^{\gamma-1}+\frac{1}{\gamma-1}\rho^\gamma\right)|U|+\frac{\gamma}{\gamma-1}\left|(R^{\gamma-1}-\rho^{\gamma-1})\rho (U-u)\right|.
\end{aligned}
\end{equation*}
The first term ist precisely the potential part of $A(R,U;\rho,u)$ times $|U|$, which is less than or equal to $v$. Hence it remains to control the second term. Observe first that
\begin{equation*}
\left|(R^{\gamma-1}-\rho^{\gamma-1})\rho (U-u)\right|\leq \frac12 \rho(R^{\gamma-1}-\rho^{\gamma-1})^2+\frac12\rho|U-u|^2,
\end{equation*}
so that the latter summand is exactly the kinetic part of $A(R,U;\rho,u)$. Moreover, since $\rho\leq \overline{r}$, we only need to handle the term 
\begin{equation}\label{criticalterm}
(R^{\gamma-1}-\rho^{\gamma-1})^2.
\end{equation}
To this end, note that for $\underline{r}\leq\rho,R\leq\overline{r}$ it holds that
\begin{equation*}
R^\gamma-\frac{\gamma}{\gamma-1}\rho R^{\gamma-1}+\frac{1}{\gamma-1}\rho^\gamma\geq c(R-\rho)^2.
\end{equation*}
Indeed, this follows from the smoothness and strict convexity of the map $r\mapsto r^\gamma$ on $[\underline{r},\overline{r}]$ and the observation that the expression on the left hand side contains exactly the terms of order $\geq 2$ of the Taylor expansion of this map around $R$; cf.\ also~\cite{FeJiNo}. 

Then, writing $\lambda=\frac{\rho}{R}$, we estimate
\begin{equation*}
\begin{aligned}
(R^{\gamma-1}-\rho^{\gamma-1})^2&=R^{2\gamma-2}(1-\lambda^{\gamma-1})^2\\
&= R^{2\gamma-4} \left(\frac{1-\lambda^{\gamma-1}}{1-\lambda}\right)^2(R-\rho)^2\leq C(R-\rho)^2,
\end{aligned}
\end{equation*}
thus concluding the proof. Indeed, for the last inequality, we use that the map
\begin{equation*}
\lambda\mapsto\frac{1-\lambda^{\gamma-1}}{1-\lambda}
\end{equation*}
is continuous on any compact subinterval of $(0,\infty)$; for $\lambda\neq1$ this is obvious, and at $\lambda=1$ it follows from 
\begin{equation*}
\lim_{\lambda\to 1}\frac{1-\lambda^{\gamma-1}}{1-\lambda}=\gamma-1
\end{equation*}
thanks to L'H\^opital's rule.
\end{proof}

With this preparation, the local weak-strong uniqueness for the compressible Euler system now follows easily from Theorem~\ref{Eweak-strong} and Lemma~\ref{ABestimate}:

\begin{theorem}[Local weak-strong uniqueness]\label{locweakstrong}
Let ${\Omega}\times [0,T]$ be a domain in which $(R,U)$ is a local strong solution of \eqref{isentropic}, and $(\rho,u)$ is an admissible weak solution of \eqref{isentropic} with initial data $(R(0),U(0))$ in $\Omega$, satisfying the local energy inequality on $\Omega\times[0,T]$. Suppose further there exist numbers $0\leq \underline{r}<\overline{r}<\infty$ (if $\gamma<2$ assume $\underline{r}>0$) and $v>0$ such that
\begin{equation*}
\underline{r}\leq R,\rho\leq\overline{r}\quad\text{and}\quad |U|, |u|\leq v\quad\text{almost everywhere in $\Omega\times(0,T)$.}
\end{equation*}

Then for every point $x_0\in{\Omega}$ there exist another domain $x_0\in\Omega'\subset\Omega$ and a time $0<T'\leq T$ such that
\begin{equation*}
R=\rho=0\quad\text{or}\quad R=\rho, \quad U=u\quad\text{almost everywhere in $\Omega'\times(0,T')$.}
\end{equation*}
\end{theorem}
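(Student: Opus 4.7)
The strategy is to apply Theorem~\ref{Eweak-strong} with a cutoff $\phi$ shaped as a shrinking backward cone centered at $x_0$, tuned so that the inequality of Lemma~\ref{ABestimate} makes the boundary flux contribution nonpositive, and then to invoke Grönwall's inequality to force the localised relative energy to vanish identically on the cone. Combined with the Bregman structure of the integrand, this yields the pointwise coincidence of the two solutions in a neighbourhood of $(x_0, 0)$.

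More precisely, fix $r > 0$ so small that $\overline{B_r(x_0)} \subset \Omega$, let $K > 0$ be the constant of Lemma~\ref{ABestimate} associated with $\underline{r}, \overline{r}, v, \gamma$, and choose smooth nonincreasing cutoffs $\chi \colon [0, \infty) \to [0, 1]$ with $\chi \equiv 1$ on $[0, r/2]$ and $\supp \chi \subset [0, r)$, and $\eta \colon [0, T] \to [0, 1]$ with $\eta \equiv 1$ on $[0, T'/2]$ and $\eta \equiv 0$ on $[T', T]$ for some $T' < r/(2K)$. Define
\[
\phi(x, t) := \eta(t)\, \chi(|x - x_0| + K t).
\]
The constancy of $\chi$ near $0$ together with the restriction $Kt < r/2$ enforced by $\eta$ ensures that $\phi$ is identically equal to $\eta(t)$ in a neighbourhood of $\{x_0\} \times \supp \eta$, so $\phi \in C_c^1(\Omega \times [0, T])$ despite the non-smoothness of $x \mapsto |x - x_0|$ at $x_0$.

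Writing $a = A(R, U; \rho, u)$ and $b = B(R, U; \rho, u)$ as in Lemma~\ref{ABestimate}, the form of $\phi$ gives
\[
a\, \partial_t \phi + b \cdot \nabla \phi = \eta'(t)\, \chi(\cdot)\, a + \eta(t)\, \chi'(\cdot) \Bigl[K a + b \cdot \tfrac{x - x_0}{|x - x_0|}\Bigr] \le 0,
\]
because $\eta', \chi' \le 0$ and $K a + b \cdot \tfrac{x - x_0}{|x - x_0|} \ge K a - |b| \ge 0$ by Lemma~\ref{ABestimate}. Since the initial data coincide on $\supp \phi(\cdot, 0) \subset \Omega$, Theorem~\ref{Eweak-strong} reduces to
\[
E_{rel}^\phi(\tau) \le 2\, \|U\|_{C^1(\overline{\mathcal{O}})} \int_0^\tau E_{rel}^\phi(t)\, dt \quad \text{for a.e.\ } \tau \in (0, T),
\]
whence Grönwall's inequality forces $E_{rel}^\phi \equiv 0$.

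Finally, the potential part of the integrand $A(R, U; \rho, u)$ is the Bregman divergence of the strictly convex function $r \mapsto \tfrac{1}{\gamma-1} r^\gamma$, which vanishes exactly when $R = \rho$ (with $R = \rho = 0$ as the only vacuum case, permissible only when $\gamma \ge 2$ so that $\underline r = 0$ is allowed), while the kinetic part $\tfrac{1}{2}\rho|u-U|^2$ then vanishes iff $\rho = 0$ or $u = U$. Choosing $\Omega' := B_{r/4}(x_0)$ and shrinking $T'$ if necessary so that $\phi > 0$ throughout $\Omega' \times (0, T')$, we obtain the claimed pointwise alternative there. The key idea -- and the only real obstacle -- is the choice of cone slope $K$ dictated by Lemma~\ref{ABestimate}: it encodes the finite-speed-of-propagation principle at the level of the relative energy and is precisely what makes the flux contribution along the lateral cone boundary nonpositive, so that it can be discarded.
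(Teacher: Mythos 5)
Your proposal is correct and follows essentially the same route as the paper: the paper's cutoff $\phi(x,t)=\phi^0\bigl(x+Ct\tfrac{x-x_0}{|x-x_0|}\bigr)$ with $\phi^0$ radial and nonincreasing is exactly your $\chi(|x-x_0|+Kt)$ (your extra temporal factor $\eta$ is a harmless variant), the slope being the constant from Lemma~\ref{ABestimate} so that $a\,\partial_t\phi+b\cdot\nabla\phi\le 0$, after which Gr\"onwall applied to \eqref{comprelenergy} and the strict convexity of $r\mapsto r^\gamma$ give the conclusion. No gaps worth noting.
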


\begin{proof}
We will choose $T'>0$ and $\phi\in C_c^1(\Omega\times[0,T'])$ with the following properties:
\begin{itemize}
\item[i)] $\phi\geq0$;
\item[ii)] There exists an open set $\Omega'\ni x_0$ such that $\phi\equiv1$ in $\Omega'\times[0,T']$;
\item[iii)] The sum of the first two integrals on the right hand side of \eqref{comprelenergy} becomes non-positive for every $\tau\leq T'$.
\end{itemize}
Once this is achieved, Gronwall's inequality applied to \eqref{comprelenergy} will imply $E_{rel}^\phi\equiv0$ in $(0,T')$ and hence the statement of the theorem. Indeed, note that, as $\gamma>1$, the map $|\cdot|^\gamma$ is strictly convex, which implies that the relative energy is always non-negative, being zero if and only if $\rho\equiv R$ and, if $\rho=R>0$, $u\equiv U$ on the support of $\phi$. Thus $E^\phi_{rel}(\tau)=0$ for a.e.\ $\tau$ implies Theorem~\ref{locweakstrong}. 

To define $\phi$ appropriately, let $\eta>0$ such that $B_\eta(x_0)\subset\Omega$, and let $\phi^0\in C_c^1(B_\eta(x_0))$ be a nonnegative function, radially symmetric around $x_0$, monotone non-increasing in the radial direction, with $\phi^0\equiv 1$ in $B_{\eta/2}(x_0)$.

Set 
\begin{equation*}
a(x,t)=A(R(x,t),U(x,t);\rho(x,t),u(x,t)),\quad b(x,t)=B(R(x,t),U(x,t);\rho(x,t),u(x,t))
\end{equation*}
with the notation from the beginning of this subsection. Then, by Lemma~\ref{ABestimate} and the assumptions of the theorem, there exists $C>0$ such that 
\begin{equation*}
|b(x,t)|\leq Ca(x,t)\quad \text{for almost every $(x,t)\in\Omega\times(0,T)$.}
\end{equation*}
Now set $\phi$ to be
\begin{equation*}
\phi(x,t)=\phi^0\left(x+Ct\frac{x-x_0}{|x-x_0|}\right),
\end{equation*}
so that $\phi$ solves the transport equation 
\begin{equation}\label{transport}
\partial_t\phi-C\frac{x-x_0}{|x-x_0|}\cdot\nabla\phi=0
\end{equation}
with initial datum $\phi^0$. By choice of $\phi^0$, we have that $\phi(t)\in C_c^1(B_\eta(x_0))$ at least for  
\begin{equation*}
0\leq t <\frac{\eta}{2C},
\end{equation*}
and $\phi\equiv 1$ in the set
\begin{equation*}
B_{\eta/4}(x_0)\times [0,T'),\quad T'=\frac{\eta}{4C}.
\end{equation*}
Moreover, by choice of $C$, radial symmetry and monotonicity of $\phi$, and \eqref{transport},
\begin{equation*}
\begin{aligned}
a(x,t)\partial_t\phi(x,t)&+b(x,t)\cdot\nabla\phi(x,t)\\
&= Ca(x,t)\frac{x-x_0}{|x-x_0|}\cdot\nabla\phi(x,t)+b(x,t)\cdot\nabla\phi(x,t)\\
&\leq -Ca(x,t)|\nabla\phi(x,t)|+|b(x,t)||\nabla\phi(x,t)|\leq0
\end{aligned}
\end{equation*}
for almost every $(x,t)\in B_{\eta}(x_0)\times (0,T')$. Hence indeed $\phi$ satisfies i)--iii) for $\Omega'=B_{\eta/4}(x_0)$ and $T'=\frac{\eta}{4C}$, and we conclude.
\end{proof}

This theorem and its proof immediately imply the following corollary:
\begin{corollary}[Finite speed of propagation]\label{finite}
Let $(\rho,u)$ be an admissible weak solution and $(R,U)$ a $C^1$ solution of \eqref{isentropic} on $\R^d\times(0,T)$ and assume  the initial data of both solutions agree outside of a ball $B_\eta(0)$. Suppose further there exist numbers $0\leq\underline{r}<\overline{r}<\infty$ (if $\gamma<2$ assume $\underline{r}>0$) and $v>0$ such that
\begin{equation*}
\underline{r}\leq \rho,R\leq\overline{r}\quad\text{and}\quad |u|, |U|\leq v\quad\text{almost everywhere in $\R^d\times(0,T)$.}
\end{equation*}
Then $(\rho,u)=(R,U)$ on the complement of the Ball $B_{\eta+Ct}(0)$ for almost every $t\in(0,T)$, where $C$ is the constant from Lemma~\ref{ABestimate}.
\end{corollary}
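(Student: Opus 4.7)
The plan is to mimic the proof of Theorem~\ref{locweakstrong}, but with a cutoff whose support is a backward light cone expanding at speed~$C$ with time, anchored at an arbitrary test point $(x_0,t_0)$ with $|x_0|>\eta+Ct_0$. The boundedness hypotheses make Lemma~\ref{ABestimate} applicable, producing a constant $C$ with $|B(R,U;\rho,u)|\leq CA(R,U;\rho,u)$ almost everywhere; this is the very $C$ that appears in the conclusion of the corollary, and matching it exactly is the key point.

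For such a fixed $(x_0,t_0)\in\R^d\times(0,T)$, I would take
\begin{equation*}
\phi(x,t)=\tilde\phi\bigl(|x-x_0|+Ct\bigr),
\end{equation*}
where $\tilde\phi\in C^1(\R)$ is non-negative and non-increasing, identically $1$ on $(-\infty,s_1]$ and vanishing on $[s_0,\infty)$ for thresholds with $Ct_0<s_1<s_0\leq|x_0|-\eta$; such thresholds exist precisely because $|x_0|-\eta>Ct_0$. This cutoff enjoys three essential features: (i)~$\supp\phi(\cdot,0)\subset\overline{B_{s_0}(x_0)}\subset\R^d\setminus B_\eta(0)$, so the data-agreement outside $B_\eta(0)$ immediately gives $E^\phi_{rel}(0)=0$; (ii)~$\phi(x_0,t_0)=\tilde\phi(Ct_0)=1$; and (iii)~for every $t\in[0,t_0]$, $\tilde\phi$ is constantly $1$ in a neighbourhood of its argument $Ct$, so $\phi\equiv 1$ in a spatial neighbourhood of $x_0$ and is therefore $C^1$ there despite the singularity of $|x-x_0|$. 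If preferred, one may instead replace $|x-x_0|$ by $\sqrt{|x-x_0|^2+\epsilon^2}$ to make $\phi$ globally smooth; the smoothed gradient still has norm at most one, which preserves the sign calculation below.

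Inserting $\phi$ into Theorem~\ref{Eweak-strong} with $\partial_t\phi=C\tilde\phi'$ and $\nabla\phi=\tilde\phi'\,(x-x_0)/|x-x_0|$, the flux integrand (using $a,b$ as defined at the end of the proof of that theorem) becomes
\begin{equation*}
a\,\partial_t\phi+b\cdot\nabla\phi=\tilde\phi'\Bigl(Ca+b\cdot\tfrac{x-x_0}{|x-x_0|}\Bigr)\leq\tilde\phi'\bigl(Ca-|b|\bigr)\leq 0,
\end{equation*}
since $\tilde\phi'\leq 0$ and $Ca-|b|\geq 0$ by Lemma~\ref{ABestimate}. Thus~\eqref{comprelenergy} collapses on $[0,t_0]$ to $E^\phi_{rel}(\tau)\leq 2\int_0^\tau\|U(t)\|_{C^1}E^\phi_{rel}(t)\,dt$ with $E^\phi_{rel}(0)=0$, and Gronwall's inequality yields $E^\phi_{rel}\equiv 0$ on $[0,t_0]$. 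Because the integrand of $E^\phi_{rel}$ is pointwise non-negative (by strict convexity of $r\mapsto r^\gamma$, as in the proof of Theorem~\ref{locweakstrong}) and $\phi(x_0,t_0)=1$, this forces either $\rho(x_0,t_0)=R(x_0,t_0)=0$ or $(\rho,u)(x_0,t_0)=(R,U)(x_0,t_0)$. Letting $(x_0,t_0)$ range over $\{|x|>\eta+Ct\}$ yields the corollary.

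The main hurdle, conceptually, is matching the propagation speed \emph{exactly}: any slack (a non-trivial transition width for $\tilde\phi$, or a constant larger than $C$ in Lemma~\ref{ABestimate}) would give finite propagation only at a strictly larger speed. Both issues are handled by giving $\tilde\phi$ a genuine plateau at $1$ strictly past $Ct_0$ and by invoking the sharp bound of Lemma~\ref{ABestimate} with the same $C$ that the corollary's conclusion uses.
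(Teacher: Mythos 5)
Your proof is correct and follows essentially the paper's own route: the cutoff $\tilde\phi(|x-x_0|+Ct)$ is exactly the radially symmetric, non-increasing profile transported inward at speed $C$ used in the proof of Theorem~\ref{locweakstrong} (there written as $\phi^0\bigl(x+Ct\tfrac{x-x_0}{|x-x_0|}\bigr)$), combined with the sign argument from Lemma~\ref{ABestimate}, Theorem~\ref{Eweak-strong} and Gronwall, which is precisely how the paper derives Corollary~\ref{finite} ``immediately'' from that theorem and its proof. The only cosmetic refinements are to take $s_0<|x_0|-\eta$ strictly so that an open set $\mathcal{O}$ containing $\supp\phi$ with coinciding initial data exists, and to exhaust the open cone $\{|x|>\eta+Ct\}$ by countably many such anchor points $(x_0,t_0)$.
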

\begin{remark}
One possible choice is $R\equiv\overline{\rho}>0$, $U\equiv{0}$, in which case the corollary yields that an initially compactly supported solution\footnote{Recall that we use ``compactly supported" to mean ``the velocity is zero and the density is constant outside a compact set".} will remain compactly supported, the support spreading with speed at most $C$. This is the most common formulation of the principle of finite propagation speed. However, we have not been able to relate the speed $C$ to the speed of sound.

\end{remark}

\begin{corollary}
Let $(\rho,u)$ be a bounded admissible weak solution of \eqref{isentropic} on $\Omega\times(0,T)$ such that 
\begin{equation*}
\rho, \rho^0\geq\underline{r}\quad\text{almost everywhere in $\R^d\times(0,T)$,} 
\end{equation*}
where $\underline{r}\geq 0$ with strict inequality if $\gamma<2$. If the initial data $(\rho^0,u^0)$ is smooth on some subdomain $\Omega'\subset\Omega$, then for each $x_0\in\Omega'$ there exist $\Omega''\subset\Omega'$ and $0<T'\leq T$ such that $(\rho,u)$ remains smooth in $\Omega''\times[0,T')$.
\end{corollary}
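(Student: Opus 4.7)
The plan is to reduce the claim to local weak-strong uniqueness (Theorem~\ref{locweakstrong}) by invoking classical local existence theory for smooth solutions of \eqref{isentropic}. Concretely, I would proceed as follows.

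First, fix $x_0\in\Omega'$. By hypothesis, $(\rho^0,u^0)$ is smooth on $\Omega'$. Choose a ball $B=B_{2\eta}(x_0)\Subset\Omega'$, and extend $(\rho^0|_B,u^0|_B)$ to smooth data $(\tilde\rho^0,\tilde u^0)$ on all of $\R^d$ (for instance by smooth cutoff against a constant background density $\overline\rho\geq\underline r$). Provided $\tilde\rho^0$ stays strictly positive---which holds automatically when $\underline r>0$, and which we can arrange in a neighbourhood of $x_0$ when $\rho^0(x_0)>0$---the classical local existence theorem for the isentropic Euler system (Kato--Majda theory for symmetric hyperbolic systems) yields a smooth solution $(\tilde R,\tilde U)\in C^1(\R^d\times[0,T^*])$ for some $T^*>0$, with bounded density bounded away from zero and bounded velocity. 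Restricting to a neighbourhood of $x_0$ produces a local strong solution $(R,U)$ of \eqref{isentropic} on $B_{\eta}(x_0)\times[0,T^*]$ that agrees with $(\rho,u)$ initially throughout $B_\eta(x_0)$, and which satisfies the pointwise bounds on density and velocity required by Lemma~\ref{ABestimate} (possibly after shrinking $\eta$ and $T^*$).

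Second, apply Theorem~\ref{locweakstrong} to the pair $(R,U)$ and the given admissible weak solution $(\rho,u)$ on the domain $B_\eta(x_0)\times[0,T^*]$: it furnishes a ball $\Omega''=B_{\eta/4}(x_0)$ and a time $T'=\min(T^*,\eta/(4C))$ such that either $R=\rho=0$ almost everywhere on $\Omega''\times[0,T')$, or $(R,U)=(\rho,u)$ almost everywhere there. In either case $(\rho,u)$ coincides almost everywhere with a smooth function on $\Omega''\times[0,T')$, which is the claimed local preservation of smoothness.

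The main obstacle is the vacuum-allowing regime $\gamma\geq 2$ with $\underline r=0$ and $\rho^0(x_0)=0$: in that case the classical local existence theory does not apply directly because the system degenerates at vacuum. To handle this I would separate cases. If $\rho^0(x_0)>0$, smoothness of $\rho^0$ gives $\rho^0\geq c>0$ on some neighbourhood, so after extension the standard theory produces the required smooth solution. If $\rho^0$ vanishes identically on a neighbourhood of $x_0$ (and $u^0$ is smooth there), then $(R,U)=(0,u^0)$ is a smooth local solution of \eqref{isentropic}, and the argument goes through verbatim. The remaining delicate situation, in which $\rho^0(x_0)=0$ but $\rho^0$ does not vanish in a neighbourhood, is precisely the vacuum-boundary regime; here one would need to invoke a more refined local existence result (in the spirit of Makino--Ukai--Kawashima or Jang--Masmoudi) that applies at the vacuum interface for $\gamma\geq2$, and then conclude exactly as before by Theorem~\ref{locweakstrong}.
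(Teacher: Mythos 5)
Your proposal follows essentially the same route as the paper: extend $(\rho^0,u^0)\restriction_{\Omega'}$ smoothly to all of $\R^d$, invoke classical local well-posedness (Majda) to obtain a smooth solution $(R,U)$, and then apply Theorem~\ref{locweakstrong} to conclude that $(\rho,u)$ coincides locally with $(R,U)$, hence is smooth near $x_0$ up to a positive time. Your extra case analysis at vacuum (the regime $\underline{r}=0$ with $\rho^0(x_0)=0$ but $\rho^0\not\equiv0$ near $x_0$, where one would need vacuum-interface existence results) is actually more careful than the paper's own proof, which only demands the extension satisfy $R^0\geq\tfrac12\underline{r}$ and cites the classical theory without commenting on that degenerate situation.
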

\begin{proof}
Let $(R^0,U^0)$ be a smooth and sufficiently decaying extension of $(\rho^0,u^0)\restriction_{\Omega'}$ to all of $\R^d$, with $R^0\geq\frac12\underline{r}$. Then by classical local well-posedness results (see e.g.\ \cite{majda}), there exists a time $\tilde{T}>0$ and a smooth solution $(R,U)$ on $\R^d\times[0,\tilde{T})$ emanating from $(R^0,U^0)$. Then, by Theorem~\ref{locweakstrong}, $(\rho,u)$ coincides with $(R,U)$ on some domain $\Omega''\times[0,T')$ with $x_0\in\Omega''$.
\end{proof}


\begin{thebibliography}{10}



\bibitem{eulerboundary}
C. Bardos, L. Sz\'ekelyhidi, Jr., and E. Wiedemann.
\newblock On the absence of uniqueness for the Euler equations: the effect of the boundary. 
\newblock {\em Uspekhi Mat. Nauk} {\bf 69}(2(416)):3--22, 2014.




\bibitem{brenieretal}
Y.~Brenier, C.~De Lellis, and L.~Sz\'ekelyhidi, Jr.
\newblock Weak-strong uniqueness for measure-valued solutions.
\newblock {\em Comm. Math. Phys.} {\bf 305}(2):351--361, 2011.

\bibitem{brezinamacha}
J. B\v{r}ezina and V. M\'acha.
\newblock Inviscid limit for the compressible Euler system with non-local interactions.
\newblock {\em Preprint.} 

\bibitem{brezinafeireisl}
J. B\v{r}ezina and E. Feireisl.
\newblock Measure-valued solutions to the complete Euler system.
\newblock {\em Preprint.} 

\bibitem{chen1}
G.-Q.~Chen. Vacuum states and global stability of rarefaction waves for compressible flow. {\em Methods Appl. Anal.} {\bf 7}(2):337--361, 2000.

\bibitem{chenchen}
G.-Q.~Chen and J.~Chen. Stability of rarefaction waves and vacuum states for the
multidimensional Euler equations. {\em J. Hyperbolic Differ. Equ.} {\bf 4}(1):105--122, 2000.


\bibitem{chen2}
G.-Q.~Chen and H.~Frid. Uniqueness and asymptotic stability of Riemann solutions for the compressible Euler equations. {\em Trans. Amer. Math. Soc.} {\bf 353}(3):1103--1117, 2001.


\bibitem{chiodarolithesis}
E. Chiodaroli. Non-standard solutions to the Euler system of isentropic gas dynamics.
\newblock {\em PhD thesis, Universit\"at Z\"urich,} 2012.

\bibitem{chiodaroli}
E. Chiodaroli.
\newblock A counterexample to well-posedness of entropy solutions to the compressible isentropic Euler system.
\newblock {\em J. Hyperbolic Differ. Equ.} {\bf 11}(3):493--519, 2014.

\bibitem{chiodarolidelelliskreml}
E. Chiodaroli, C. De Lellis, and O. Kreml.
\newblock Global ill-posedness of the isentropic system of gas dynamics.
\newblock {\em Comm. Pure Appl. Math.} {\bf 68}(7):1157--1190, 2015.

\bibitem{chiodarolifeireislkreml}
E. Chiodaroli, E. Feireisl, and O. Kreml.
\newblock On the weak solutions to the equations of a compressible heat conducting gas.
\newblock {\em Ann. Inst. H. Poincar\'e Anal. Non Lin\'eaire} {\bf 32}(1):225--243, 2015.

\bibitem{chiodarolimichalek}
E. Chiodaroli and M. Mich\'alek.
\newblock Existence and non-uniqueness of global weak solutions to inviscid primitive and Boussinesq equations.
\newblock {\em Comm. Math. Phys.} {\bf 353}(3):1201--1216, 2017.




\bibitem{Daf}
C.~Dafermos.
\newblock { Hyperbolic conservation laws in continuum physics. Third edition. Grundlehren der Mathematischen Wissenschaften [Fundamental Principles of Mathematical Sciences] {\bf325},}
\newblock {\em Springer-Verlag, Berlin}, 2010. xxxvi+708~pp.


\bibitem{DeGwLy}
T.~D\k{e}biec, P.~Gwiazda, K.~\L yczek, and A.~\'Swierczewska-Gwiazda. Relative entropy method for measure-valued solutions in natural sciences. {\em Preprint.}


\bibitem{DST}
S. Demoulini, D. Stuart, and A. Tzavaras.
\newblock Weak-strong uniqueness of dissipative measure-valued solutions for polyconvex elastodynamics.
\newblock {\em Arch. Ration. Mech. Anal.} {\bf 205}(3):927--961, 2012.





\bibitem{euler1}
C.~De~Lellis and L.~Sz{\'e}kelyhidi, Jr.
\newblock The Euler equations as a differential inclusion.
\newblock {\em Ann. of Math. (2)} {\bf 170}(3):1417--1436, 2009.

\bibitem{euler2}
C.~De~Lellis and L.~Sz{\'e}kelyhidi, Jr.
\newblock On admissibility criteria for weak solutions of the {E}uler
  equations.
\newblock {\em Arch. Ration. Mech. Anal.} {\bf 195}(1):225--260, 2010.



\bibitem{donatelli}
D. Donatelli, E. Feireisl, and P. Marcati. Well/ill posedness for the Euler-Korteweg-Poisson system and related problems. {\em Comm. Partial Differential Equations} {\bf 40}(7):1314--1335, 2015.



\bibitem{duchonrobert}
J. Duchon and R. Robert.
\newblock Inertial energy dissipation for weak solutions of incompressible Euler and Navier-Stokes equations.
\newblock {\em Nonlinearity} {\bf 13}(1):249--255, 2000.

\bibitem{FeJiNo}
E.~Feireisl, B.~J. Jin, and A.~Novotn{\' y}.
\newblock Relative entropies, suitable weak solutions, and weak-strong
  uniqueness for the compressible {N}avier-{S}tokes system.
\newblock {\em J. Math. Fluid Mech.} {\bf 14}:712--730, 2012.

\bibitem{feireislgwiazdasavage}
E.~Feireisl, P. Gwiazda, and A. \'Swierczewska-Gwiazda.
\newblock On weak solutions to the 2D Savage-Hutter model of the motion of a gravity-driven avalanche flow.
\newblock {\em Comm. Partial Differential Equations} {\bf 41}(5):759--773., 2016. 

\bibitem{feireislgwiazdawsu}
E.~Feireisl, P. Gwiazda, A. \'Swierczewska-Gwiazda, and E.~Wiedemann.
\newblock Dissipative measure-valued solutions to the compressible Navier-Stokes system. 
\newblock {\em Calc. Var. Partial Differential Equations} {\bf 55}(6):Art.~141, 20~pp., 2016. 

\bibitem{FeKrVa}
E. Feireisl, O. Kreml, and A. Vasseur. Stability of the isentropic Riemann solutions of the full multidimensional Euler system.
\newblock {\em SIAM J. Math. Anal.} {\bf 47}(3):2416--2425, 2015. 

\bibitem{feireisllukacova}
E.~Feireisl and M. Luk\'a\v{c}ov\'a-Medvid'ov\'a.
\newblock Convergence of a mixed finite element finite volume scheme for the isentropic Navier-Stokes system via dissipative measure-valued solutions.
\newblock {\em Preprint.} 







\bibitem{GwSwWi}
P.~Gwiazda, A.~\'Swierczewska-Gwiazda, and E.~Wiedemann.
\newblock Weak-strong uniqueness for measure-valued solutions of some compressible fluid models.
\newblock {\em Nonlinearity} {\bf 28}(11):3873--3890, 2015.




\bibitem{klingenberg}
C. Klingenberg and S. Markfelder. The Riemann problem for the multi-
dimensional isentropic system of gas dynamics is ill-posed if it contains a shock. {\em Preprint.} 

\bibitem{kruzhkov}
S.~N.~Kruzhkov. First order quasilinear equations with several independent variables. {\em Mat. Sb.} {\bf 81}(123):228--255, 1970.







\bibitem{luoxiexin}
T. Luo, C. Xie, and Z. Xin. Non-uniqueness of admissible weak solutions to compressible Euler systems with source terms. {\em Adv. Math.} {\bf 291}:542--583, 2016.

\bibitem{majda}
A. Majda. Compressible fuid flow and systems of conservation laws in several space variables. {\em Applied Mathematical Sciences} {\bf 53}, {\em Springer-Verlag, New York}, 1984.

\bibitem{perthame}
B.~Perthame. Transport equations in biology. Frontiers in Mathematics. {\em Birkh\"auser Verlag, Basel}, 2007. x+198~pp.








\bibitem{saintraymond}
L.\ Saint-Raymond.
\newblock Hydrodynamic limits of the Boltzmann equation. Lecture Notes in Mathematics, 1971. 
\newblock {\em Springer-Verlag, Berlin}, 2009. xii+188~pp.

\bibitem{scheffer}
V.~Scheffer.
\newblock An inviscid flow with compact support in space-time.
\newblock {\em J. Geom. Anal.} {\bf 3}(4):343--401, 1993.

\bibitem{serre}
D. Serre. Long-time stability in systems of conservation laws, using relative entropy/energy. {\em Arch. Ration. Mech. Anal.} {\bf 219}(2):679--699, 2016

\bibitem{serrevasseur}
D. Serre and A. Vasseur. The relative entropy method for the stability of intermediate shock waves; the rich case. {\em Discrete Contin. Dyn. Syst.} {\bf 36}(8):4569--4577, 2016.




\bibitem{shnirel1}
A.~Shnirelman.
\newblock On the nonuniqueness of weak solution of the Euler equation.
\newblock {\em Comm. Pure. Appl. Math.} {\bf 50}(12):1261--1286, 1997.

\bibitem{vortexsheet}
L. Sz\'ekelyhidi, Jr. 
\newblock Weak solutions to the incompressible Euler equations with vortex sheet initial data.
\newblock {\em C. R. Math. Acad. Sci. Paris} {\bf 349}(19-20):1063--1066, 2011.




\bibitem{wang}
D. Wang.
\newblock On global solutions of multi-dimensional compressible fluid flows. {\em Analytical approaches to multidimensional balance laws}, 209--241, 
\newblock {\em Nova Sci. Publ., New York}, 2006.


\bibitem{eulerexistence}
E. Wiedemann.
\newblock Existence of weak solutions for the incompressible Euler equations.
\newblock {\em Ann. Inst. H. Poincar\'e Anal. Non Lin\'eaire} {\bf 28}(5):727--730, 2011.

\bibitem{WiSurvey}
E. Wiedemann.
\newblock Weak-strong uniqueness in fluid dynamics.
\newblock {\em Preprint.} 



\end{thebibliography}
\end{document}